\newenvironment{enumerateroman}
{\begin{enumerate}}
{\end{enumerate}}
\theoremstyle{plain}
\newtheorem{thm}[equation]{Theorem}
\newtheorem*{thm*}{Theorem}
\newtheorem{prop}[equation]{Proposition} 
\newtheorem*{prop*}{Proposition}
\newtheorem{lem}[equation]{Lemma}
\newtheorem{cor}[equation]{Corollary}
\theoremstyle{definition}
\newtheorem{defn}[equation]{Definition}
\newtheorem{rem}[equation]{Remark}
\newcommand{\mN}{{\mathbb N}}
\newcommand{\mcE}{\mathcal{E}}
\newcommand{\mcF}{\mathcal{F}}
\newcommand{\mcV}{\mathcal{V}}
\newcommand{\frakg}{\mathfrak{g}}
\renewcommand{\to}{\longrightarrow}
\newcommand{\mdot}{\!\cdot\!}
\providecommand{\im}{\mathop{\rm im}\nolimits}
\providecommand{\gr}{\mathop{\rm gr}\nolimits}
\providecommand{\ad}{\mathop{\rm ad}\nolimits}
\providecommand{\ext}{\mathop{\rm ext}\nolimits}
\providecommand{\Id}{\mathop{\rm Id}\nolimits}
\providecommand{\End}{\mathop{\rm End}\nolimits}
\providecommand{\Hom}{\mathop{\rm Hom}\nolimits}
\providecommand{\Ann}{\mathop{\rm Ann}\nolimits}
\providecommand{\Prim}{\mathop{\rm Prim}\nolimits}
\providecommand{\Spec}{\mathop{\rm Spec}\nolimits}
\providecommand{\res}{\mathop{\rm res}\nolimits}
\begin{document}
\centerline{\LARGE\bfseries{The commutant of simple modules over}}
\centerline{\LARGE\bfseries{almost commutative algebras}}
\vspace{1.5cm}
\centerline{\Large Oliver Ungermann}
\vspace{1cm}
\centerline{Ernst-Moritz-Arndt-Universit\"at Greifswald}
\centerline{Institut f\"ur Mathematik und Informatik}
\centerline{Friedrich-Ludwig-Jahn-Stra\ss e 15a}
\centerline{17487 Greifswald, Germany}
\vspace{1cm}
\centerline{\Large October 2009}
\vspace{1cm}

\begin{abstract}
Let $B$ be a finitely generated algebra over a field $k$. Then $B$ is called a
Jacobson algebra if every semiprime ideal of $B$ is semiprimitive. We will discuss
several conditions, all involving the commutant of simple $B$-modules, which imply
that $B$ is Jacobson. In particular, we will recover the well-known result
that every finitely generated almost commutative algebra is Jacobson. The same
holds true for $\mathbb{N}$-filtered $k$-algebras $B$ with a locally finite
filtration such that the associated graded $k$-algebra is left-noetherian.
\end{abstract}

\subsection*{Introduction}
\noindent Following Dixmier~\cite{Dix6} we shall recall the proofs
of the following three well-known results which are fundamental for
the representation theory of universal enveloping algebras: Let $k$
be a field and $\frakg$ a finite-dimensional Lie algebra over $k$.
Let $U(\frakg)$ denote the universal enveloping algebra of $\frakg$.
Then we have:
\begin{enumerate}
\item If $M$ is a simple $U(\frakg)$-module, then its commutant
$\End_{U(\frakg)}(M)$ is algebraic over~$k$. (This assertion is not
obvious if $M$ is infinite-dimensional as a $k$-vector space.)
\item The associative $k$-algebra $U(\frakg)$ is a Jacobson algebra
which means that every prime ideal is an intersection of primitive ideals.
\item In particular $U(\frakg)$ is Jacobson-semisimple.
\end{enumerate}
These assertions can be proved using variants of Grothendieck's generic
flatness lemma. In algebraic geometry generic flatness means the
following: If $A$ is a noetherian integral domain, $B$ a finitely
generated commutative $A-$algebra, and $M$ a finitely generated
$B$-module, then there exists a non-zero $f\in A$ such that the
localization $M_f=A_f\otimes_A M$ of~$M$ is free and hence flat.
Duflo~\cite{Duflo5} and Quillen~\cite{Quillen} showed that generic
freeness over $k[X]$ is satisfied for simple $k[X]\otimes_k B$-modules
where $B$ is a finitely generated almost commutative $k$-algebra,
compare Theorem~\ref{comm_thm:Q0_for_almost_commutative_algebras} and
Theorem~\ref{comm_thm:almost_commutative_algebras_are_Jacobson}. This
includes all quotients~$B$ of universal enveloping algebras of finite-dimensional
Lie algebras. Artin, Small, and Zhang~\cite{ArtinSmallZhang} proved that
generic freeness holds for simple $k[X]\otimes_k B$-modules where $B$ is
a $k$-algebra endowed with a locally finite $\mN$-filtration such that
the associated graded algebra is noetherian.\\\\
These results about generic freeness can be used to prove that algebras
over commutative Jacobson rings are also Jacobson. This was done by
Duflo~\cite{Duflo5} in the almost commutative case and by
Szczepanski~\cite{Szczepanski} in the $\mN$-filtered case.\\\\
As a motivation we give an application of the above results. Let $k$ be
an algebraically closed field. Let $P$ be a primitive ideal of $U(\frakg)$
and $M$ a simple $U(\frakg)$-module such that $P=\Ann_{U(\frakg)}(M)$,
keeping in mind that this equality does not determine $M$ up to equivalence.
In this case it follows $\End_{U(\frakg)}(M)=k\mdot 1$ because
$\End_{U(\frakg)}(M)$ is algebraic over $k$ and $k$ is algebraically
closed. Let $Z(\frakg)$ denote the center of $U(\frakg)$. Since
$Z(\frakg)/(Z(\frakg)\cap P)$ can be regarded as a subalgebra
of $\End_{U(\frakg)}(M)$, we conclude $Z(\frakg)/(Z(\frakg)\cap P)\cong k$.
Thus there exists a unique homomorphism
$\chi_P:Z(\frakg)\to k$ such that $\ker\chi_P=Z(\frakg)\cap P$,
which is called the central character of $P$. Since $U(\frakg)$ is
semiprimitive, we know that $W\in Z(\frakg)$ and $\chi_P(W)=0$ for all
$P\in\Prim(U(\frakg))$ implies $W=0$.\\\\
Let $S(\frakg)$ be the symmetric algebra of $\frakg$ and
$\bar{\gamma}:S(\frakg)\to U(\frakg)$ the Duflo map which is, in a certain
sense not to be explained here, a modification of the symmetrization map
\[\beta(X_1\ldots X_n)=\frac{1}{n!}\sum_{\sigma\in S_n} X_{\sigma(1)}
\ldots X_{\sigma(n)}.\]
The Poincar\'e-Birkhoff-Witt theorem implies that $\beta$ and $\bar{\gamma}$ are
$\ad(\frakg)$-invariant linear isomorphisms. Let $I(\frakg)$ denote the subalgebra
of all $\ad(\frakg)$-invariants of $S(\frakg)$. In 1969 Duflo proved that the
restriction $\gamma:I(\frakg)\to Z(\frakg)$ of $\bar{\gamma}$ is an
isomorphism of algebras, the so-called Duflo isomorphism. See~\cite{Duflo3}
for the case of solvable and semisimple Lie algebras, and ~\cite{Duflo4} for
the general case. Duflo's idea can be described as follows: In order to
prove $\gamma(X_1X_2)=\gamma(X_1)\gamma(X_2)$ for all $X_1,X_2\in I(\frakg)$,
it suffices, in view of the above considerations, to verify
\[\chi_P(\gamma(X_1X_2))=\chi_P(\gamma(X_1))\;\chi_P(\gamma(X_2))\]
for all $P\in\Prim(U(\frakg))$. The latter identity is an immediate consequence
of Duflo's character formula which states: If $f\in\frakg^\ast$, $P$ is the primitive
ideal of $U(\frakg)$ associated to $f$ via the Kirillov map, and $\chi_f=\chi_P$
is its central character, then
\[\chi_f(\gamma(X))=X(f)\]\
for all $X\in I(\frakg)$. But the proof of Duflo's character formula is demanding
and requires a detailed knowledge of the representation theory and the primitive
ideal theory of $U(\frakg)$.\\\\
The importance of the results $U(\frakg)$ semiprimitive and $\End_{U(\frakg)}(M)$
algebraic over $k$ in the context of Duflo isomorphism motivated us to elaborate
on their proofs.

\subsection*{Jacobson algebras}\label{sec:Jacobson_algebras}

Let $B$ be a unital ring. An ideal $P$ of $B$ is called primitive if there
exists a (non-zero) simple $B$-module $M$ with annihilator
\[P=\Ann_B(M)=\{a\in B:b\mdot m=0\text{ for all }m\in M\}\;.\]
An ideal $I$ of $B$ is said to be semiprimitive if it is an intersection of
primitive ones. The \textbf{Jacobson radical} $J(B)$ is defined as the
intersection of all primitive ideals of $B$.\\\\
We shall recall some facts about simple modules. If $L$ is a maximal left
ideal of $B$, then $M=B/L$ is a simple $B$-module whose annihilator is given
by
\[\Ann_B(M)=\{a\in B:a\mdot B\subset L\}\;.\]
Conversely, if $M$ is a simple $B$-module and $\xi\in M$ is non-zero, then
$L=\{a\in B:a\mdot\xi=0\}$ is a maximal left ideal and $M\cong B/L$ in a
canonical way. Since maximal left ideals exist by Zorn's Lemma, the
existence of simple modules and primitive ideals is guaranteed.\\\\
It is known that maximal ideals are primitive: Let $B$ be a ring and
$I$ a maximal ideal of $B$. By Zorn's Lemma there exists a maximal
left ideal $L$ of $B$ with $I\subset L$. Now $M=B/L$ is a simple
$B$-module with $I\subset\Ann_B(M)$. Since $I$ is maximal, we see
that $I=\Ann_B(M)$ is primitive.

\begin{lem}
If $B$ is commutative, then every primitive ideal of $B$ is maximal.
\end{lem}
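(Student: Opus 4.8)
The plan is to read off the conclusion from the correspondence between simple modules and maximal left ideals recalled just above the statement, using commutativity to collapse ``left ideal'' to ``ideal''. First I would start with a primitive ideal $P$ of $B$, so by definition $P=\Ann_B(M)$ for some simple $B$-module $M$. Choosing a non-zero $\xi\in M$, the set $L=\{a\in B:a\mdot\xi=0\}$ is a maximal left ideal of $B$ with $M\cong B/L$ canonically. Since $B$ is commutative, every left ideal is two-sided, so $L$ is a maximal ideal of $B$ in the ordinary sense.

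The second step is to identify $P$ with $L$. Using the formula $\Ann_B(B/L)=\{a\in B:a\mdot B\subset L\}$ for the annihilator of $B/L$, I would check the set-theoretic equality $\{a\in B:aB\subset L\}=L$: the inclusion $\supset$ holds because $L$ is an ideal, and the inclusion $\subset$ holds because $B$ is unital, so $a=a\mdot 1\in aB\subset L$. Hence $P=\Ann_B(M)=\Ann_B(B/L)=L$, which is maximal, as desired.

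There is essentially no obstacle; the one point that must not be glossed over is the unitality hypothesis on $B$, which is exactly what forces $P\subset L$ (without a unit a primitive ideal of a commutative ring need not be maximal). Everything else is a direct unwinding of the definitions already established in the excerpt.
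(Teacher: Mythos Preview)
Your argument is correct and follows essentially the same route as the paper's: pick a non-zero $\xi\in M$, form the maximal left ideal $L=\{a:a\mdot\xi=0\}$, and use commutativity to identify $P$ with $L$. The only cosmetic difference is that the paper establishes $L\subset P$ by the direct computation $a\mdot M=a\mdot(B\mdot\xi)=B\mdot(a\mdot\xi)=0$ for $a\in L$, whereas you pass through the formula $\Ann_B(B/L)=\{a:aB\subset L\}$ and invoke that $L$ is a two-sided ideal; the content is identical.
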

\begin{proof}
Let $I$ be a primitive ideal of $B$, $I=\Ann_B(M)$ for some simple
$B$-module $M$. Let $\xi\in M$ be non-zero. Then $L=\{a\in B:a\mdot\xi=0\}$
is a maximal (left) ideal of~$B$. Clearly $I\subset L$. If $a\in I$, then
it follows $a\mdot M=a\mdot(B\mdot\xi)=B\mdot(a\mdot\xi)=0$. Thus
$I=L$ is maximal.
\end{proof}

\noindent We refrain from giving the various characterizations of the
Jacobson radical and content ourselves with the following observation.

\begin{lem}\label{comm_lem:1+a_invertible}
If $a\in J(B)$, then $1+xa$ is left invertible for all $x\in B$.
\end{lem}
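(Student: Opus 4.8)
The plan is to show that if $a \in J(B)$ and $x \in B$, then $1 + xa$ generates all of $B$ as a left ideal, from which left invertibility is immediate. First I would argue by contradiction: suppose $B(1+xa) \neq B$. Then $B(1+xa)$ is a proper left ideal, so by Zorn's Lemma it is contained in some maximal left ideal $L$ of $B$. Set $M = B/L$, which is a simple $B$-module, and let $\xi$ denote the image of $1$ in $M$; note $\xi \neq 0$ since $L$ is proper, and $M = B \cdot \xi$.

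Next I would exploit that $a \in J(B) = \bigcap \Ann_B(N)$ over all simple modules $N$; in particular $a \in \Ann_B(M)$, so $a \cdot M = 0$. Applying this to the element $x\xi \in M$ gives $(xa)\cdot\xi = x\cdot(a\xi) = 0$. On the other hand, $1 + xa \in B(1+xa) \subset L = \Ann_B(\xi)$, so $(1+xa)\cdot\xi = 0$, i.e. $\xi + (xa)\xi = 0$. Combining the two displayed relations yields $\xi = 0$, contradicting $\xi \neq 0$. Hence $B(1+xa) = B$, so there exists $y \in B$ with $y(1+xa) = 1$, which is exactly the assertion that $1 + xa$ is left invertible.

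I do not expect any genuine obstacle here; the only point requiring a little care is the identification $J(B) = \bigcap_{M \text{ simple}} \Ann_B(M)$, which holds by definition of the Jacobson radical as the intersection of all primitive ideals together with the fact (recalled just above in the text) that every primitive ideal is of the form $\Ann_B(M)$ for a simple module $M$. Everything else is the standard one-line manipulation with the cyclic vector $\xi$, using that $a$ annihilates every simple module and in particular kills $x\xi$.
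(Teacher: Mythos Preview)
Your proof is correct and follows essentially the same route as the paper's: both argue by contradiction, embed $B(1+xa)$ in a maximal left ideal $L$, use $a\in J(B)\subset\Ann_B(B/L)$ to get $xa\in L$, and derive $1\in L$. The only cosmetic difference is that the paper works directly inside $L$ (noting $aB\subset L$, hence $xa\in L$, hence $1=(1+xa)-xa\in L$), while you phrase the same computation in terms of the cyclic vector $\xi=1+L$; your sentence ``applying this to the element $x\xi$'' is a slight slip of wording---you actually apply $a\mdot M=0$ to $\xi$ and then multiply by $x$---but the conclusion $(xa)\mdot\xi=0$ is correct.
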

\begin{proof}
Suppose that $1+xa$ is not left invertible. Then $B(1+xa)\neq B$. By
Zorn's Lemma there exists a maximal left ideal $L$ with $1+xa\in L$
and $1\not\in L$. Since $B/L$ is a simple $B$-module and $a\in J(B)$,
it follows $a\in aB\subset L$. This implies $1=(1+xa)-xa\in L$, a
contradiction.
\end{proof}

\noindent Next we shall introduce the Baer radical of $B$. Let $I$ be
an ideal of $B$ with $I\neq B$. We say that $I$ is prime if the following
condition is satisfied: If $J_1$ and $J_2$ are ideals of $B$ with
$J_1J_2\subset I$, then $J_1\subset I$ or $J_2\subset I$. The radical
$\sqrt{I}$ of $I$ is defined as the intersection of all prime ideals
containing $I$. Further we say that $I$ is semiprime if $I=\sqrt{I}$.
By Zorn's Lemma there exist maximal ideals containing $I$. And since
maximal ideals are prime, this implies that $\sqrt{I}\neq B$ is
well-defined. Clearly $I\subset\sqrt{I}$ and $\sqrt{I}=\sqrt{\sqrt{I}}$.
This shows that $\sqrt{I}$ is the smallest semiprime ideal of $B$
containing~$I$. Finally the \textbf{Baer radical} $\sqrt{0}$ is
defined to be the intersection of all prime ideals of $B$. In
the literature $\sqrt{0}$ is also known as the prime radical, the
lower nilradical or the Baer-McCoy radical of $B$.\\\\
It is easy to see that primitive ideals are prime: Let $M$ be a simple
module such that $I=\Ann_B(M)$. Let $J_1$, $J_2$ be ideals of $B$ such
that $J_1J_2\subset I$. If $J_2\not\subset I$, then $J_2\mdot M$ is a
non-zero submodule and hence $J_2\mdot M=M$ because $M$ is simple. This
yields $J_1\mdot M=J_1J_2\mdot M=I\mdot M=0$ and hence $J_1\subset I$.
Thus $I$ is prime. In particular we see that the Jacobson radical
contains the Baer radical: $\sqrt{0}\subset J(B)$.\\\\
In the proof of the next lemma we will need the following fact:
If $I$ is a semiprime ideal of $B$ and $J$ is an ideal of $B$ with
$J^2\subset I$, then it follows $J\subset I$. This can be seen as
follows: If $P$ is prime ideal with $J^2\subset I\subset P$, then
it follows $J\subset P$. Intersecting all these $P$ gives
$J\subset\sqrt{I}=I$.

\begin{lem}\label{comm_lem:Baer_radical_contains_nilpotent_ideals}
If $I$ is a nilpotent ideal of $B$, then $I\subset\sqrt{0}$.
\end{lem}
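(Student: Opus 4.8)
The plan is to use the defining property of the Baer radical directly: since $\sqrt{0}$ is by definition the intersection of all prime ideals of $B$, it suffices to show that $I\subset P$ for every prime ideal $P$ of $B$. So I would fix an arbitrary prime ideal $P$. Because $I$ is nilpotent, there is an integer $n\geq 1$ with $I^n=0$, and in particular $I^n\subset P$.

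Next I would argue by induction on $n$ that $I^n\subset P$ forces $I\subset P$. For $n=1$ there is nothing to prove. For $n\geq 2$, write $I^n=I\mdot I^{n-1}\subset P$; since $P$ is prime, either $I\subset P$, and we are done, or $I^{n-1}\subset P$, in which case the inductive hypothesis yields $I\subset P$ as well. Hence $I$ is contained in every prime ideal of $B$, so $I\subset\sqrt{0}$.

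Alternatively, one can bypass the explicit induction by invoking the fact recorded just before the statement (a semiprime ideal that contains $J^2$ contains $J$): applying it with the semiprime ideal $\sqrt{0}$ in place of $I$ and with $J=I^{\lceil n/2\rceil}$, one gets $I^{\lceil n/2\rceil}\subset\sqrt{0}$, and iterating halves the exponent until $I\subset\sqrt{0}$. Either route is short.

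I do not expect a real obstacle here. The only points to get right are to reduce to a single prime ideal before manipulating anything, rather than trying to work with the intersection $\sqrt{0}$ as a whole, and to set up the induction on the nilpotency index in the correct (downward) direction.
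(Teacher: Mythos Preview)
Your proof is correct. Your primary route---reducing to a single prime ideal $P$ and using primeness together with induction on the nilpotency index---differs from the paper's argument, which works with $\sqrt{0}$ directly via its semiprimeness: the paper supposes $I\not\subset\sqrt{0}$, takes the largest $n$ with $I^n\not\subset\sqrt{0}$, and obtains a contradiction from $(I^k)^2\subset I^{n+1}\subset\sqrt{0}$ with $k=\lceil (n+1)/2\rceil$. This is precisely your ``alternative'' approach, phrased as a contradiction rather than an iteration. So your closing remark that one should ``reduce to a single prime ideal \ldots\ rather than trying to work with the intersection $\sqrt{0}$ as a whole'' is a bit misleading---the paper does exactly the latter, and it works just as well---but either route is immediate and there is nothing to fix.
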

\begin{proof}
Suppose that $I$ is nilpotent and $I\not\subset\sqrt{0}$. Then
there is some $n\ge 1$ such that $I^n\not\subset\sqrt{0}$ and
$I^{n+1}\subset\sqrt{0}$. Let $k$ be the smallest integer
$\ge(n+1)/2$. Clearly $(I^k)^2\subset I^{n+1}\subset\sqrt{0}$.
Since $\sqrt{0}$ is semiprime, this implies $I^k\subset\sqrt{0}$,
a contradiction.
\end{proof}

\noindent A ring $B$ is called (semi-)prime or (semi-)primitive if
its zero ideal is (semi-)prime or (semi-)primitive respectively.
In the literature semiprimitive rings are also known as
Jacobson-semisimple rings. Clearly $I$ is a semiprime or
semiprimitive ideal of $B$ if and only if $B/I$ is a
semiprime or semiprimitive ring respectively.\\\\
For noetherian rings we obtain additional results.

\begin{prop}\label{comm_prop:Baer_radical_is_nilpotent}
Let $B$ be a left noetherian ring. Then the Baer radical $\sqrt{0}$
is a nilpotent ideal of $B$. Thus $B$ has a maximal nilpotent ideal.
\end{prop}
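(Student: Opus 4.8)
The plan is to show that $\sqrt{0}$ coincides with the maximal nilpotent ideal of $B$. This splits into two essentially independent ingredients: (a) a left noetherian ring has a largest nilpotent (two-sided) ideal $N_0$, which together with nilpotence of $\sqrt 0$ gives the last sentence of the proposition at once; and (b) any ring with no nonzero nilpotent ideal is semiprime, i.e.\ has zero Baer radical.

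First I would establish (a). The set of nilpotent ideals of $B$ is nonempty (it contains $0$), so by the ascending chain condition on left ideals it has a maximal element $N_0$. The key point is that a sum of two nilpotent ideals is nilpotent: if $I^p=0$ and $J^q=0$, then expanding $(I+J)^{p+q-1}$ yields a sum of products of $p+q-1$ of these two ideals, in each of which at least $p$ factors equal $I$ or at least $q$ factors equal $J$; since $I$ and $J$ are two-sided, such a product lies in $I^p=0$ or in $J^q=0$. Hence for any nilpotent ideal $J$ the ideal $N_0+J$ is nilpotent, so $N_0+J=N_0$ by maximality, i.e.\ $J\subseteq N_0$; thus $N_0$ is nilpotent and contains every nilpotent ideal.

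Next I would prove (b). Let $R$ be a ring with Baer radical $\neq 0$ and fix a nonzero $a$ in it; the goal is to build a prime ideal of $R$ not containing $a$, which is absurd. Put $a_0=a$; whenever $a_n\neq 0$ one has $a_nRa_n\neq 0$, for otherwise $Ra_nR$ would be a nonzero (as $R$ is unital) nilpotent ideal, $(Ra_nR)^2\subseteq Ra_nRa_nR=0$. So one may choose inductively $a_{n+1}\in a_nRa_n\setminus\{0\}$, and $S=\{a_0,a_1,\ldots\}$ is an $m$-system with $0\notin S$. By Zorn's Lemma pick an ideal $P$ maximal among ideals disjoint from $S$; I claim $P$ is prime. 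If $J_1J_2\subseteq P$ with $J_1\not\subseteq P$ and $J_2\not\subseteq P$, then $P+J_1$ and $P+J_2$ each meet $S$, hence, being ideals and using $a_{n+1}\in a_nRa_n$, each contains $a_n$ for all large $n$; then $a_{n+1}\in(P+J_1)R(P+J_2)\subseteq P+J_1J_2\subseteq P$ for such $n$, contradicting $P\cap S=\emptyset$. So $P$ is prime, $a\notin P$, yet $a$ lies in the Baer radical $\subseteq P$ — a contradiction, whence the Baer radical of $R$ is $0$.

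Finally I would combine the two by passing to $\bar B=B/N_0$. A nilpotent ideal of $\bar B$ lifts to an ideal $J\supseteq N_0$ of $B$ with $J^k\subseteq N_0$, hence $J$ is nilpotent and $J\subseteq N_0$ by (a), so $\bar B$ has no nonzero nilpotent ideal and is therefore semiprime by (b). By Lemma~\ref{comm_lem:Baer_radical_contains_nilpotent_ideals} every prime ideal of $B$ contains $\sqrt 0\supseteq N_0$, so the prime ideals of $\bar B$ are precisely the images of those of $B$, and the Baer radical of $\bar B$ is $\sqrt 0/N_0$; by (b) this is $0$, so $\sqrt 0=N_0$ is nilpotent, and being a nilpotent ideal containing all nilpotent ideals it is the maximal one. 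The main obstacle is ingredient (b) — producing a prime ideal that avoids a prescribed element of the Baer radical; the noetherian hypothesis is used only in (a), and that is precisely what upgrades ``nil'' to ``nilpotent''.
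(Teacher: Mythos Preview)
Your proof is correct and follows essentially the same route as the paper: first use the noetherian hypothesis to produce a largest nilpotent ideal $N_0$, then show $N_0$ is semiprime so that $\sqrt{0}=N_0$. The only difference is packaging---the paper shows $N_0$ semiprime by invoking its Appendix~C characterization of semiprime ideals (Proposition~\ref{comm_prop:characterization_of_semiprime_ideals}.\textit{(ii)}), whose proof is the very $m$-system argument you carry out inline in your step~(b) after passing to $B/N_0$.
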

\begin{proof}
The first step is to prove that $B$ contains a maximal nilpotent
ideal. Suppose that this is not the case. Let $J_0$ be an arbitrary
nilpotent ideal of $B$. Since $J_0$ is not maximal, there exists
a nilpotent ideal $J_1$ of $B$ such that $J_1\not\subset J_0$. 
Clearly $J_0+J_1$ is a nilpotent ideal which properly contains
$J_0$. By induction we obtain a strictly increasing sequence of
nilpotent (two-sided) ideals, in contradiction to the assumption
that $B$ left noetherian.\\\\
Let $J$ denote the maximal nilpotent ideal of $B$.
Clearly $J\subset\sqrt{0}$ by Lemma~
\ref{comm_lem:Baer_radical_contains_nilpotent_ideals}.
Suppose that $J$ is not semiprime. Then Proposition~
\ref{comm_prop:characterization_of_semiprime_ideals}.\textit{(ii)}
of the Appendix shows that there exists an ideal $J_1$ of $B$
with $J_1^2\subset J$ and $J_1\not\subset J$. Since $J$ is
nilpotent, it follows that $J+J_1$ is also nilpotent, in
contradiction to the maximality of $J$. Thus $J$ is semiprime.
This shows that $\sqrt{0}\subset\sqrt{J}=J$ so that
$J=\sqrt{0}$ is nilpotent.
\end{proof}

\noindent The next proposition is crucial for the proof of
Proposition~\ref{comm_prop:Baer_radical_equals_Jacobson_radical}.

\begin{prop}\label{comm_prop:nil_ideals_are_nilpotent}
Let $B$ be a left noetherian ring. Then every nil right ideal of $B$
is nilpotent.
\end{prop}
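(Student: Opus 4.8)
The plan is to combine a maximality argument for left annihilators with the nilpotence of the Baer radical provided by Proposition~\ref{comm_prop:Baer_radical_is_nilpotent}. For $a\in B$ I write $\ell(a)=\{b\in B:ba=0\}$; this is a \emph{left} ideal of $B$, so, $B$ being left noetherian, every non-empty family of such annihilators has a maximal member.

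The core step, which I would isolate as a lemma, is the following: \emph{if $B$ is left noetherian and $N\neq 0$ is a nil right ideal of $B$, then $aBa=0$ for some non-zero $a\in N$.} To prove it, I choose $a\in N\setminus\{0\}$ with $\ell(a)$ maximal in $\{\ell(b):b\in N\setminus\{0\}\}$ and fix $r\in B$; the goal is $ara=0$. Because $N$ is a right ideal, $ar\in N$ is nilpotent, so $(ar)^{n}a=0$ for some $n\ge 1$; choose $n$ minimal. If $n=1$ we are done, so assume $n\ge 2$ and put $b=(ar)^{n-1}a=a(ra)^{n-1}$. Then $b\neq 0$ by minimality of $n$, $b\in aB\subseteq N$, and $\ell(a)\subseteq\ell(b)$ because $b=a\cdot(ra)^{n-1}$, so $xa=0$ forces $xb=0$. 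Maximality of $\ell(a)$ now forces $\ell(a)=\ell(b)$, while $(ar)\cdot b=(ar)^{n}a=0$ gives $ar\in\ell(b)=\ell(a)$, that is, $ara=0$. Since $r$ was arbitrary, $aBa=0$.

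To finish I reduce to the semiprime case. By Proposition~\ref{comm_prop:Baer_radical_is_nilpotent} the Baer radical $\sqrt{0}$ is nilpotent, say $(\sqrt{0})^{s}=0$; the quotient $\bar B=B/\sqrt{0}$ is again left noetherian and is a semiprime ring (as $\sqrt{0}=\sqrt{\sqrt{0}}$ is semiprime), and the image $\bar N$ of $N$ is a nil right ideal of $\bar B$. It suffices to show $\bar N=0$, for then $N\subseteq\sqrt{0}$ and so $N^{s}\subseteq(\sqrt{0})^{s}=0$. Assuming $\bar N\neq 0$, the core step applied to $\bar B$ yields a non-zero $\bar a\in\bar N$ with $\bar a\bar B\bar a=0$; then $J=\bar B\bar a\bar B$ is a two-sided ideal of $\bar B$ with $J^{2}=\bar B(\bar a\bar B\bar a)\bar B=0$, whence $J=0$ by the fact recalled just before Lemma~\ref{comm_lem:Baer_radical_contains_nilpotent_ideals} (taken with the semiprime ideal $(0)$). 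This contradicts $0\neq\bar a\in J$, so $\bar N=0$ and $N$ is nilpotent.

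I expect the core step to be the one genuine difficulty: a nil right ideal need not be contained in $\sqrt{0}$ a priori, so Proposition~\ref{comm_prop:Baer_radical_is_nilpotent} cannot be applied to $N$ directly, and it is exactly the maximal-annihilator construction — first producing a square-zero one-sided ideal, and then, after passing modulo $\sqrt{0}$, killing $\bar N$ — that closes the gap. Everything else (a quotient of a left noetherian ring is left noetherian; $B/\sqrt{0}$ is semiprime; the identity $(ar)^{n-1}a=a(ra)^{n-1}$; the computation $J^{2}=\bar B(\bar a\bar B\bar a)\bar B$) is routine bookkeeping.
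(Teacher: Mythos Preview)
Your proof is correct and follows essentially the same Levitzki-type maximal-left-annihilator argument as the paper. The only organizational difference is that the paper works directly with the relative annihilators $l(a)=\{x\in B:xa\in\sqrt{0}\}$ to show $aBa\subset\sqrt{0}$ for a suitable $a\in R\setminus\sqrt{0}$, whereas you first prove the clean version $aBa=0$ and then pass to the quotient $B/\sqrt{0}$; the underlying mechanism is identical.
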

\begin{proof}
Let $R$ be a nil right ideal of $B$. Let $J=\sqrt{0}$ denote the Baer
radical of $B$ which is known to be the maximal nilpotent ideal of $B$
by Proposition~\ref{comm_prop:Baer_radical_is_nilpotent}. Consequently
we must prove $R\subset J$. Suppose that $R\not\subset J$. Since $B$
is right noetherian, there exists some $a\in R\setminus J$ such that
the left ideal $l(a)=\{x\in B:xa\in J\}$ is maximal among all left
ideals of this form. Let $y\in B$ be arbitrary. Our aim ist to prove
$aya\in J$. If $ay\in J$, then we are done. So let us assume
$ay\not\in J$. For $ay\in L$ and $L$ is nil, there exists some
$k>1$ such that $(ay)^{k-1}\not\in J$ and $(ay)^k\in J$. Since
$l(a)\subset l(\,(ay)^{k-1}\,)$, it follows $l(\,(ay)^{k-1}\,)=l(a)$
by the maximality of $l(a)$. This implies $ay\in l(a)$ and hence
$aya\in J$. Since $aBa\subset J$ and $J$ is semiprime, it follows
$a\in J$. This contradiction proves $R\subset J$.
\end{proof}

\noindent The preceding two propositions as well as the subsequent
theorem remain valid after exchanging the words left and right.\\\\
The next result is fundamental.

\begin{lem}[\textbf{Schur}]\label{comm_lem:Schur}
If $M$ is a simple left $B$-module, then the commutant
\[D=\End_B(M)=\{\varphi:M\to M: \varphi \text{ is }B\text{-linear}\}\]
is a division ring. In particular the center $Z(D)$ of $D$ is a field.
\end{lem}
\begin{proof}
First of all, $D=\End_B(M)$ is a (not necessarily commutative) unital
ring. If $\varphi\in D$, $\varphi\neq 0$, then it follows $\im\varphi=M$
and $\ker\varphi=0$ because $\im\varphi\neq 0$ and $\ker\varphi\neq M$
are submodules of the simple $B$-module $M$. This means that $\varphi$
is bijective so that $\varphi^{-1}$ exists. Clearly $\varphi^{-1}\in D$.
This proves $D$ to be a division ring. Obviously $Z(D)$ is a field.
\end{proof}

In order to obtain information about the given ring $B$ we will
introduce an additional variable $X$ and consider (simple) modules $M$
over the ring $B[X]$ of all polynomials in one indeterminate with
coefficients in $B$. This striking idea can be traced back to the
famous one-sided article~\cite{Rabin} of Rabinovich from 1929.
Note that any $B[X]$-module structure boils down to a $B$-module $M$
together with a map $\varphi(m)=X\mdot m$ in its commutant. Clearly
$M$ is $B[X]$-simple if and only if $M$ does not admit proper
$\varphi$-invariant $B$-submodules.\\\\
Now let $A$ be a commutative unital ring and $B$ a unital $A$-algebra.
One might think of $A$ as a subring of the center $Z(B)$ of the
ring $B$. Note that the commutant $\End_B(M)$ of any $B$-module $M$
becomes an $A$-algebra via $(z\mdot\varphi)(m)=z\mdot\varphi(m)$.
We say that $\varphi\in\End_B(M)$ is integral over $A$ if there
exist elements $z_0,\ldots,z_{n-1}\in A$ such that
$\varphi^n+\sum_{k=0}^{n-1}z_k\varphi^k=0$. In the sequel we shall
investigate whether the following condition is satisfied:
\begin{itemize}
\item[\textbf{(E)}] If $M$ is a simple $B[X]$-module, then
$\varphi(m)=X\mdot m$ is integral over $A$.
\end{itemize}
Let $P=\Ann_{B[X]}(M)$ denote the primitive ideal of $B[X]$ associated
to $M$. Since $A\cap P=\Ann_A(M)$ is a prime ideal of $A$, it follows
that $A/A\cap P$ is an integral domain. Note that $M$ can be regarded as an
$A/A\cap P$-module. We point out that \textbf{(E)} is satisfied if
and only if $\varphi(m)=X\mdot m$ is integral over $A/A\cap P$. This is
the case if and only if $X$ is integral as an element of the integral
domain and $A/A\cap P$-algebra $A[X]/A[X]\cap P$.\\\\
The next proposition is due to Duflo, see Th\'eor\`eme~1
of~\cite{Duflo5}. This result seems to be inspired by an earlier
work of Amitsur on the radical of polynomial rings, see~\cite{Amitsur}.

\begin{prop}\label{comm_prop:Baer_radical_equals_Jacobson_radical}
Let $A$ be a commutative ring and $B$ be a left noetherian $A$-algebra.
If $B$ satisfies \textbf{(E)}, then the Jacobson radical and the Baer
radical of $B$ coincide.
\end{prop}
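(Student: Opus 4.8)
\noindent The inclusion $\sqrt{0}\subset J(B)$ holds in any ring, since primitive ideals are prime, so the task is to prove $J(B)\subset\sqrt{0}$. The plan is to reduce this to a statement about single elements: if we can show that every $a\in J(B)$ is nilpotent, then $J(B)$ is a nil ideal, hence a nil right ideal, hence nilpotent by Proposition~\ref{comm_prop:nil_ideals_are_nilpotent} (applicable because $B$ is left noetherian), and therefore $J(B)\subset\sqrt{0}$ by Lemma~\ref{comm_lem:Baer_radical_contains_nilpotent_ideals}. Together with the reverse inclusion this yields $J(B)=\sqrt{0}$.

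So fix $a\in J(B)$ and assume, aiming at a contradiction, that $a$ is not nilpotent. First I would check that $1-aX$ is not left invertible in $B[X]$: if $g\mdot(1-aX)=1$ with $g=\sum_{i=0}^{m}b_iX^i$, then, since $X$ is central in $B[X]$, comparison of coefficients forces $b_0=1$, $b_i=b_{i-1}a$ for $1\le i\le m$, and $b_ma=0$, so that $a^{m+1}=0$ --- a contradiction. Hence $B[X](1-aX)$ is a proper left ideal, which by Zorn's Lemma lies in a maximal left ideal $L$. Then $M=B[X]/L$ is a simple $B[X]$-module; write $\xi$ for the image of $1$ and let $\varphi\in\End_B(M)$ be multiplication by $X$, so that $M=\sum_{i\ge 0}B\varphi^i\xi$, and $1-aX\in L$ translates into $\xi=a\varphi\xi$.

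Now condition \textbf{(E)} enters: $\varphi$ is integral over $A$, say $\varphi^n+\sum_{k=0}^{n-1}z_k\varphi^k=0$ with the $z_k$ lying in the image of $A$ and hence central in $B$. It follows that $\varphi^i\xi\in\sum_{k=0}^{n-1}B\varphi^k\xi$ for all $i$, so that $M$ is a \emph{finitely generated} $B$-module. On the other hand, centrality of $X$ gives $\varphi(ay)=a\varphi(y)$ for all $y\in M$, and feeding this into $\xi=a\varphi\xi$ yields, by induction on $i$, the identities $\varphi^i\xi=a\varphi^{i+1}\xi$ for all $i\ge 0$. Thus every generator $\varphi^k\xi$ of $M$ lies in $J(B)M$, whence $M=J(B)M$. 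Since $M$ is finitely generated over $B$, Nakayama's Lemma --- whose proof uses only the left invertibility of $1+xa$ for $a\in J(B)$ established in Lemma~\ref{comm_lem:1+a_invertible} --- forces $M=0$, contradicting the simplicity (hence non-vanishing) of $M$. Therefore $a$ is nilpotent, and we conclude as in the first paragraph. The step I expect to be the heart of the matter is the deduction, from \textbf{(E)}, that the simple module $M$ is finitely generated over $B$: without this Nakayama's Lemma has nothing to bite on, whereas with it the argument closes at once.
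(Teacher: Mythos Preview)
Your argument is correct, and it shares the same scaffolding as the paper's proof: reduce to showing every $a\in J(B)$ is nilpotent, observe that $a$ non-nilpotent forces $1-aX$ to generate a proper left ideal, pass to a simple $B[X]$-module $M=B[X]/L$ with cyclic vector $\xi$, and invoke~\textbf{(E)}. The proofs diverge in how the contradiction is extracted from the integrality relation $\varphi^n+\sum z_k\varphi^k=0$. The paper applies Schur's Lemma to invert $\varphi$, divides through by $\varphi^n$ to get $1+f(\varphi^{-1})=0$ for a polynomial $f$ with zero constant term, translates $\varphi^{-1}$ into multiplication by $a$ on $\xi$ via the relation $aX\equiv 1\pmod L$, and concludes that $(1+f(a))\,\xi=0$ with $1+f(a)$ left invertible by Lemma~\ref{comm_lem:1+a_invertible}. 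You instead read the integrality relation as saying that $M$ is finitely generated over $B$, note that $\varphi^k\xi=a\varphi^{k+1}\xi\in J(B)M$ for every $k$, and let Nakayama's Lemma do the work. Your route is arguably more conceptual --- it identifies \emph{why} the contradiction arises (a nonzero finitely generated module cannot equal its Jacobson-radical submodule) rather than producing an explicit annihilating unit --- while the paper's computation has the virtue of being entirely self-contained, never appealing to Nakayama. Both ultimately rest on Lemma~\ref{comm_lem:1+a_invertible}, which is precisely the ingredient of Nakayama you single out.
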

\begin{proof}
It suffices to show that $J(B)\subset\sqrt{0}$. Let $a\in J(B)$ be arbitrary.
First we shall prove that $B[X](1-aX)=B[X]$. Suppose that this is not the case
so that there exists a maximal left ideal $L$ of $B[X]$ such that $1-aX\in L$
and $1\not\in L$. Then $M=B[X]/L$ is a simple $B[X]$-module and $1+L\in M$
is non-zero. Since $X$ is in the center of $B[X]$, the map
\[\varphi:M\to M,\;\varphi(p+L)=X\mdot(p+L)=Xp+L,\]
is $B[X]$-linear. Clearly $\varphi(1+L)=X+L\neq 0$. As $M$ is simple,
Lemma~\ref{comm_lem:Schur} implies that $\varphi$ is invertible. By
assumption $\varphi\in\End_{B[X]}(M)$ is integral over $A$. Thus there
exist $z_0,\ldots,z_{n-1}$ in~$A$ such that
$\varphi^n+\sum_{k=0}^{n-1}z_k\;\varphi^k=0$. Dividing by $\varphi^n$ we
obtain $1+f(\varphi^{-1})=0$ where $f=\sum_{k=0}^{n-1}z_kY^{n-k}$ is a
polynomial with coefficients in $A$. From $aX+L=1+L$ we deduce
\[\varphi^k(a^k\mdot(1+L))=a^kX^k+L=(aX)^k+L=(aX+L)^k=1+L\]
for all $k\ge 0$ which shows that
\[(1+f(a))\mdot(1+L)=(1+f(\varphi^{-1}))(1+L)=0\;.\]
On the other hand, $f(a)\in J(B)$ because $f$ has constant term zero. Hence
$1+f(a)$ is left invertible in $B$ by Lemma~\ref{comm_lem:1+a_invertible}. This
contradiction proves $B[X](1-aX)=B[X]$.\\\\
Thus we know that there exists a polynomial $p=\sum_{k=0}^m a_k\,X^k$ in $B[X]$
such that
\[1=p(1-aX)=a_0\;+\;\sum_{k=1}^m(a_k-a_{k-1}a)\,X^k\;+\;a_ma\,X^{m+1}\;.\]
Comparing the coefficients we find $a_k=a^k$ for $0\le k\le m$ and
$a^{m+1}=a_ma=0$, which proves $a$ to be nilpotent. Thus $J(B)$ is
a nil ideal. As $B$ is left-noetherian, Proposition~
\ref{comm_prop:nil_ideals_are_nilpotent} shows that $J(B)$ is a nilpotent
ideal. Finally Lemma~\ref{comm_lem:Baer_radical_contains_nilpotent_ideals}
implies that $J(B)$ is contained in $\sqrt{0}$.
\end{proof}

Next we recall the definition of a Jacobson algebra which goes
back to Duflo~\cite{Duflo5}. The motivation is to generalize the
assertion of Hilbert's Nullstellensatz to the non-commutative
setting. To this end we recall the following version of the
Nullstellensatz: Let $k$ be an algebraically closed field and
$I$ a proper ideal of the commutative polynomial algebra
$B=k[X_1,\ldots,X_n]$. Let $\mcV(I)=\{\,\lambda\in k^n:
p(\lambda)=0\text{ for all }p\in I\,\}$ be the set of common
zeros of $I$ and $r(I)$ the ideal of all $p\in B$ such that
$p^n\in I$ for some $n\ge 1$. The Nullstellensatz asserts
\[r(I)=\{\;p\in B:p(\lambda)=0\text{ for all }
\lambda\in\mcV(I)\;\}\;.\]
This means that $r(I)$ is equal to the intersection of the
maximal ideals $\{\,I_\lambda:\lambda\in\mcV(I)\,\}$ of $B$
where $I_\lambda=\{\,p\in B:p(\lambda)=0\,\}$. In
particular it follows $r(I)=\sqrt{I}$, in accordance with
the usual notation. If $k$ is an arbitrary field, then one
might expect that $\sqrt{I}$ equals the intersection of all
maximal ideals containing $I$, whereas these maximal ideals
need not be of the form $I_\lambda$ for some $\lambda\in k$.
Finally one might ask for a generalization of the assertion
of the Nullstellensatz to non-commutative rings~$B$. The
decisive idea is to replace maximal ideals by primitive ones.
This step is encouraged by the fact that primitive ideals
play a very prominent role in representation theory. Taking
into account that $I=\sqrt{I}$ semiprime is necessary for
$I$ to be an intersection of primitive ideals we end up with

\begin{defn}\label{comm_defn:Jacobson_ring}
A ring $B$ is called a Jacobson ring if every semiprime ideal
of $B$ is semiprimitive.
\end{defn}

The next theorem is an immediate consequence of
Proposition~\ref{comm_prop:Baer_radical_equals_Jacobson_radical}.

\begin{thm}\label{comm_thm:E_implies_Jacobson_property}
Let $A$ be a commutative ring and $B$ a left noetherian $A$-algebra
satisfying condition~\textbf{(E)}. Then $B$ is a Jacobson ring.
\end{thm}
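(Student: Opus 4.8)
The plan is to reduce the statement to Proposition~\ref{comm_prop:Baer_radical_equals_Jacobson_radical} by passing to quotient rings. Let $I$ be an arbitrary semiprime ideal of $B$; we must show that $I$ is semiprimitive, i.e. an intersection of primitive ideals of $B$. Put $\bar B=B/I$. Two things need to be arranged: first, that $\bar B$ is again a left noetherian $A$-algebra satisfying \textbf{(E)}, so that the proposition applies to $\bar B$; and second, that the resulting statement about $\bar B$ transfers back to a statement about $I$ inside $B$.

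First I would note that $\bar B=B/I$ is left noetherian, being a quotient of the left noetherian ring $B$, and that it is an $A$-algebra via the composite $A\to B\to\bar B$. Next I would check that $\bar B$ satisfies \textbf{(E)}. The key identity is $\bar B[X]=B[X]/I[X]$, so a $\bar B[X]$-module is precisely a $B[X]$-module on which $I[X]$ acts trivially; since $X$ lies in the center of $B[X]$, we have $a(Xm)=X(am)$ for $a\in I$, so $I[X]\mdot M=0$ is equivalent to $I\mdot M=0$. Submodules and $B[X]$-linear maps are unaffected by this reinterpretation, hence $M$ is simple over $\bar B[X]$ if and only if it is simple over $B[X]$ and annihilated by $I$, and in that case $\End_{B[X]}(M)=\End_{\bar B[X]}(M)$. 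For such $M$, condition \textbf{(E)} for $B$ tells us that $\varphi(m)=X\mdot m$ is integral over $A$; since the $A$-module structure on $M$ is the same whether $M$ is viewed over $B$ or over $\bar B$, this is exactly what \textbf{(E)} requires for $\bar B$.

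Then I would apply Proposition~\ref{comm_prop:Baer_radical_equals_Jacobson_radical} to $\bar B$, concluding that its Jacobson radical equals its Baer radical. Because $I$ is semiprime, the Baer radical $\sqrt{0}$ of $\bar B=B/I$ is zero; hence $J(\bar B)=0$, i.e. $\bar B$ is semiprimitive, i.e. the zero ideal of $\bar B$ is an intersection of primitive ideals of $\bar B$. Finally, the canonical projection $B\to\bar B$ induces an inclusion-preserving bijection between primitive ideals of $\bar B$ and primitive ideals of $B$ containing $I$, and it carries intersections to intersections; pulling back a family of primitive ideals of $\bar B$ with intersection $0$ therefore yields a family of primitive ideals of $B$ with intersection $I$. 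Thus $I$ is semiprimitive, and since $I$ was an arbitrary semiprime ideal, $B$ is a Jacobson ring.

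The only point that I expect to require genuine care is the stability of \textbf{(E)} under passage from $B$ to $B/I$ — concretely, the identification $\bar B[X]=B[X]/I[X]$ together with the observation that $I$ and $I[X]$ annihilate the same simple $B[X]$-modules. Everything else — that noetherianity descends to quotients, that primitive ideals correspond order-preservingly under a quotient map, and that "semiprime" is precisely the vanishing of the Baer radical in the quotient — is routine.
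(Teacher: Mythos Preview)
Your proposal is correct and follows essentially the same route as the paper: pass to $\bar B=B/I$, observe that $\bar B[X]$ is a quotient of $B[X]$ so that simple $\bar B[X]$-modules are simple $B[X]$-modules and hence \textbf{(E)} holds for $\bar B$, apply Proposition~\ref{comm_prop:Baer_radical_equals_Jacobson_radical} to get $J(\bar B)=\sqrt{0}=0$, and conclude that $I$ is semiprimitive. The paper's version is terser but the logical structure is identical.
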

\begin{proof}
Let $I$ be a semiprime ideal of $B$. Then $\bar{B}=B/I$ is a
left notherian semiprime ring. Note that $\bar{B}[X]$ is a
quotient of $B[X]$ so that every $\bar{B}[X]$-module can be
regarded as a $B[X]$-module. Thus condition~\textbf{(E)}
implies that $\varphi=X\mdot m$ is integral over $A$ for
every simple $\bar{B}[X]$-module $M$. Now Proposition~
\ref{comm_prop:Baer_radical_equals_Jacobson_radical}
yields $J(\bar{B})=\sqrt{\bar{B}}=0$ which shows that
$I$ is semiprimitive.
\end{proof}

Duflo proved that finitely generated almost commutative
algebras over commutative Jacobson rings are Jacobson,
see Th\'eor\`eme~3 of~\cite{Duflo5}. As a first step in this
direction he considers almost commutative algebras over
arbitrary fields. In this context the following condition
plays a crucial role: Let $k$ be a field and $B$ an arbitrary
$k$-algebra.

\begin{itemize}
\item[\textbf{(D)}] If $M$ is a simple $B[X]$-module, then
$\varphi(m)=X\mdot m$ is algebraic over $k$.
\end{itemize}
According to Theorem~\ref{comm_thm:E_implies_Jacobson_property}
this condition implies the Jacobson property. By contrast Irving
prefered the following stronger requirement.
\begin{itemize}
\item[\textbf{(I)}] If $M$ is a simple $B[X]$-module, then
$\End_{B[X]}(M)$ is algebraic over $k$.
\end{itemize}
However, we do not follow Irving~\cite{Irving} in saying that
$B[X]$ satisfies the Nullstellensatz if condition~\textbf{(I)}
holds true.\\\\
In~\cite{Quillen} Quillen discovered that the generic flatness lemma
can be used to prove that the commutant of a simple module is algebraic.
We shall explain this important fact in detail.\\\\
First we recall some definitions. Let $A$ be a commutative ring
and $f\in A$ not nilpotent defining the multiplicative subset
$S_f=\{f^k:k\ge 0\}$ of $A$. Then $A_f=S_f^{-1}A$ is called a simple
localization. Roughly speaking, $A_f$ is generated by $A$ and
$f^{-1}$. Note that $A_f\neq 0$ because $f$ is not nilpotent.
We say that an $A$-module $M$ is \textbf{generically free} if
there exists a non-nilpotent $f\in A$ such that $M_f=A_f\otimes_A M$
is a free $A_f$-module. In the sequel we will use the fact that the
localization functor is exact which means that $A_f$ is a flat
$A$-module.\\\\
Returning to the original situation we suppose that $B$ is a
$k$-algebra and that $M$ is a simple $B[X]$-module. Since $k[X]$
is contained in the center of $B[X]$, $M$ becomes a module over
the principal ideal domain $A=k[X]$ such that the actions of $A$
and $B[X]$ commute. Hence the set
\[T_A(M)=\{m\in M: a\mdot m=0\text{ for some }a\in A\}\]
of $A$-torsion elements is a $B[X]$-submodule of $M$. If $T_A(M)\neq 0$,
then there exist non-zero elements $m\in M$ and $f\in A$ such that
$f\mdot m=0$. Then
\[f\mdot M=f\mdot(B[X]\mdot m)=B[X]\mdot(f\mdot m)=0\]
which means $f(\varphi)=0$. Thus $\varphi(m)=X\mdot m$ is algebraic
over $k$. In order to establish property~\textbf{(D)} we must exclude
the case $T_A(M)=0$. (In this case $M$ is flat, but not necessarily
free over $A$.)\\\\
According to Quillen we introduce the following condition.

\begin{itemize}
\item[\textbf{(Q0)}] Every simple $B[X]$-module is generically
free over $k[X]$.
\end{itemize}

\noindent In the proof of Theorem~\ref{comm_thm:Q0_implies_D} we will
need the following auxiliary result.

\begin{lem}\label{comm_lem:finitely_many_irreducibles}
Let $k$ be a field. Then $k[X]$ contains infinitely many monic irreducibles.
\end{lem}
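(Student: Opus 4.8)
The plan is to give the classical Euclid-style argument, adapted to the polynomial ring $k[X]$ over a field. First I would recall that $k[X]$ is a commutative integral domain in which the units are exactly the non-zero constants (elements of $k$), and in which every non-constant polynomial factors into irreducibles because $k[X]$ is a principal ideal domain, hence a unique factorization domain. In particular monic irreducibles exist: the linear polynomials $X-\lambda$ for $\lambda\in k$ are already irreducible, so the set of monic irreducibles is non-empty.

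Next I would argue by contradiction. Suppose $p_1,\ldots,p_r$ were a complete list of the monic irreducible polynomials in $k[X]$. Form the polynomial
\[
q \;=\; 1 + p_1 p_2 \cdots p_r\,.
\]
Since each $p_i$ has degree $\ge 1$, the product $p_1\cdots p_r$ has degree $\ge 1$, so $q$ is a non-constant polynomial; in particular $q$ is neither zero nor a unit of $k[X]$. Hence $q$ admits at least one monic irreducible divisor, which by assumption must be some $p_i$ from the list. But then $p_i$ divides both $q$ and $p_1\cdots p_r$, so $p_i$ divides their difference $q - p_1\cdots p_r = 1$, forcing $p_i$ to be a unit, contradicting irreducibility. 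Therefore no finite list can be complete, i.e. $k[X]$ has infinitely many monic irreducibles.

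I do not expect any real obstacle here; the only point requiring a word of care is that the argument needs $k[X]$ to be a UFD (so that $q$ has an irreducible factor) and that a non-constant polynomial is not a unit — both standard facts about polynomial rings over a field. One could alternatively avoid UFD machinery entirely when $k$ is infinite, simply noting that $\{X-\lambda : \lambda\in k\}$ is already an infinite family of pairwise non-associate monic irreducibles; but since $k$ may be finite, the Euclid argument above is the clean uniform choice, and it is the version I would write.
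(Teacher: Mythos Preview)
Your proof is correct and follows essentially the same Euclid-style argument as the paper. The only cosmetic difference is that the paper first disposes of the case of infinite $k$ via the linear polynomials $X-\lambda$ and then runs the Euclid argument for finite $k$, whereas you (rightly) observe that the Euclid argument already works uniformly for all $k$.
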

\begin{proof}
If $k$ is infinite, then $J=\{X-\lambda:\lambda\in k\}$ is an infinite
set of irreducibles with leading coefficient $1$. Next we assume that $k$
is finite. Suppose that the set $J$ of all irreducible and monic polynomials
in $k[X]$ is finite. Set $n=\max\{\deg(q):q\in J\}$ and define $Q$ as
the product of all $q\in J$. Since $\deg(Q)>n$, it follows that $1+Q$
is reducible. Hence there exist $p,q\in k[X]$ with $1+Q=pq$ and $p$
irreducible. Clearly $p\,|\,Q$ so that there is a $q'\in k[X]$ with
$Q=pq'$. This implies $pq=1+Q=1+pq'$ and $p(q-q')=1$, a contradiction.
Thus $J$ is infinite.
\end{proof}

\noindent The next result is due to Quillen, see~\cite{Quillen}. Since the proof
in~\cite{Quillen} is quite succinct, we reproduce the elaborate argument given
by Dixmier in Lemme~2.6.4 of~\cite{Dix6}.

\begin{thm}\label{comm_thm:Q0_implies_D}
Let $B$ be a unital $k$-algebra. Then \textbf{(Q0)} implies \textbf{(D)}.
\end{thm}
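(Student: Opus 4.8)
The plan is to argue by contradiction, using the reduction made just before the theorem: for a simple $B[X]$-module $M$ it was shown that if the $A$-torsion submodule $T_A(M)$ (with $A=k[X]$) is nonzero, then $\varphi(m)=X\mdot m$ is algebraic over $k$. So it suffices to prove that $T_A(M)\neq 0$ for \emph{every} simple $B[X]$-module $M$, and I would assume towards a contradiction that some simple $M$ is torsion-free over the principal ideal domain $A=k[X]$.

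The first key step is to upgrade torsion-freeness to divisibility. Let $p\in k[X]$ be nonzero. Since $p$ lies in the center $k[X]$ of $B[X]$, the map $m\mapsto p\mdot m$ is $B[X]$-linear; it is injective because $M$ is torsion-free, so $pM$ is a nonzero $B[X]$-submodule of $M$, whence $pM=M$ by simplicity of $M$. Thus every nonzero element of $A$ acts bijectively on $M$. In particular the localization map $M\to M_f=A_f\otimes_A M$ is an isomorphism for every nonzero $f\in k[X]$ (equivalently, $M$ is naturally a $k(X)$-vector space), and $M_f\cong M\neq 0$.

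Now I would invoke \textbf{(Q0)}: there is a non-nilpotent --- hence, over the domain $k[X]$, simply nonzero --- element $f\in k[X]$ with $M_f$ free over $A_f$, say $M_f\cong A_f^{(I)}$; since $M_f\neq 0$ we have $I\neq\emptyset$. By Lemma~\ref{comm_lem:finitely_many_irreducibles} there are infinitely many monic irreducibles in $k[X]$, so I can pick a monic irreducible $p$ not dividing $f$. Then $p$ is not invertible in $A_f=k[X][f^{-1}]$, since $p^{-1}=g/f^n$ would give $f^n=pg$ and hence $p\mid f$; therefore $pA_f\subsetneq A_f$ and so $pA_f^{(I)}\subsetneq A_f^{(I)}$, i.e.\ $pM_f\neq M_f$. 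But this contradicts $pM=M$ from the previous step. Hence $T_A(M)\neq 0$, and \textbf{(D)} follows.

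The argument is short once the right observation is found, so there is no single hard computation; the point I would flag as the crux is the implication torsion-free $+$ simple $\Rightarrow$ divisible, together with the complementary fact that a nonzero free module over a proper simple localization $A_f$ is never divisible over $k[X]$ --- and it is exactly here that the auxiliary lemma enters, to exhibit an irreducible witnessing $A_f\neq k(X)$.
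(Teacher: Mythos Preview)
Your proof is correct and follows essentially the same route as the paper: assume $\varphi$ is transcendental (equivalently $T_A(M)=0$), invoke \textbf{(Q0)} to get a free localization $M_f\cong A_f^{(I)}$, pick an irreducible $g\nmid f$ via Lemma~\ref{comm_lem:finitely_many_irreducibles}, and derive a contradiction from the fact that multiplication by $g$ is surjective on $M_f$ (by simplicity of $M$) yet not surjective on the nonzero free module $A_f^{(I)}$. The only cosmetic difference is that you first establish divisibility of $M$ and hence the isomorphism $M\cong M_f$, which slightly streamlines the surjectivity half of the contradiction; the paper instead argues surjectivity of $\Psi$ directly on elementary tensors using $g(\varphi)M=M$.
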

\begin{proof}
Let $M$ be a simple $B[X]$-module. As above we regard $M$ as a module
over $A=k[X]$. Suppose that $\varphi(m)=X\mdot m$ is transcendental over $k$.
By \textbf{(Q0)} there exists a non-zero $f\in A$ such that $M_f=A_f\otimes_A M$
is a free $A_f$-module.\\\\
First of all we observe that the natural map $\eta:M\to M_f$,
$\eta(m)=1\otimes m$, is an injection: Its kernel $\ker\eta=T_{S_f}(M)=
\{m\in M: f^n\mdot m=0\text{ for some }n\ge0\}$ is trivial because
$\varphi$ is transcendental. Thus $M_f\neq 0$.\\\\
By Lemma~\ref{comm_lem:finitely_many_irreducibles} we can choose an
irreducible polynomial $g\in A$ not dividing~$f$. Then $A_f\to A_f$,
$p\mapsto gp$ is not surjective because $f^n\not\in gA$ for all $n\ge 0$.\\\\
Next we define $\Psi:M_f\to M_f,\; \Psi(p\otimes m)=(gp)\otimes m=
p\otimes(g\mdot m)$. On the one hand, $M_f\cong A_f^{(I)}\neq 0$ is
free and $\Psi:A_f^{(I)}\to A_f^{(I)}$ has the form $\Psi(p)_i=gp_i$.
This shows that $\Psi$ is not surjective. On the other hand, $\im g(\varphi)$
is a non-zero $B[X]$-submodule of $M$ because $\varphi\in\End_{B[X]}(M)$
is transcendental over $k$. As $M$ is simple, it follows $\im g(\varphi)=M$.
This shows $\im\Psi=M_f$ because $\Psi(p\otimes m)=p\otimes g(\varphi)(m)$ and
$M_f=A_f\otimes_A M$ is generated by elementary tensors. This contradiction
proves \textbf{(D)}. 
\end{proof}

Next we will prove that \textbf{(Q0)} holds for all finitely
generated almost commutative algebras. Let $A$ be a commutative
ring and $B$ a unital $A$-algebra. Suppose that there exists an
$A$-submodule $\frakg$ of $B$ such that $xy-yx\in\frakg$ for all
$x,y\in\frakg$ and $B=\sum_{n=0}^\infty \frakg^n$. In particular
this means that $\frakg$ is a Lie subalgebra of $B$ with respect
to the canonical Lie algebra structure $[x,y]=xy-yx$ of $B$.
Further we assume that $\frakg$ is a finitely generated $A$-module.
If $x_1,\ldots,x_d$ are generators of $\frakg$, then $\frakg^n$ is,
modulo $\frakg^{n-1}$ and as an $A$-module, generated by the set
of all products of the form $x^\nu=x_1^{\nu_1}\ldots x_d^{\nu_d}$
with $\nu\in\mN^d$ and $|\nu|=\sum_{j=0}^d \nu_j=n$. We say that
$(B,\frakg)$ is a finitely generated almost commutative $A$-algebra
if the above conditions are satisfied.\\\\
By means of the universal property of universal enveloping algebras
we obtain the following characterization.

\begin{lem}
Let $A$ be a commutative ring. If $\frakg$ is a Lie algebra over~$A$
which is finitely generated as an $A$-module, then every quotient
of its universal enveloping algebra $U(\frakg)$ is a finitely
generated almost commutative $A$-algebra. Conversely, if
$(B,\frakg)$ is finitely generated and almost commutative,
then $B$ is isomorphic to a quotient of $U(\frakg)$.
\end{lem}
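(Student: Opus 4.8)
The plan is to derive both implications from the universal property of $U(\frakg)$ together with the elementary fact that $U(\frakg)$ is spanned, as an $A$-module, by $1$ and by all products $\iota(X_1)\cdots\iota(X_n)$ with $X_i\in\frakg$, where $\iota:\frakg\to U(\frakg)$ denotes the canonical map; in other words $U(\frakg)=\sum_{n\geq 0}\iota(\frakg)^n$. This follows at once from the construction of $U(\frakg)$ as a quotient of the tensor algebra $T(\frakg)=\bigoplus_{n\geq 0}\frakg^{\otimes n}$, since the image of $\frakg^{\otimes n}$ is precisely the $A$-span of products of $n$ elements of $\iota(\frakg)$. Note that $\iota$ need not be injective when $A$ is not a field, but this plays no role below.

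For the first assertion let $B=U(\frakg)/I$ be a quotient and let $\bar{\frakg}$ be the image of $\iota(\frakg)$ in $B$. Then $\bar{\frakg}$ is an $A$-submodule of $B$ which is finitely generated over $A$ because $\frakg$ is. Since $\iota([X,Y])=\iota(X)\iota(Y)-\iota(Y)\iota(X)$ lies in $\iota(\frakg)$, passing to the quotient gives $xy-yx\in\bar{\frakg}$ for all $x,y\in\bar{\frakg}$, so $\bar{\frakg}$ is a Lie subalgebra of $B$. Applying the quotient map to $U(\frakg)=\sum_{n\geq 0}\iota(\frakg)^n$ yields $B=\sum_{n\geq 0}\bar{\frakg}^n$. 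Hence $(B,\bar{\frakg})$ is a finitely generated almost commutative $A$-algebra; the description of $\bar{\frakg}^n$ modulo $\bar{\frakg}^{n-1}$ in terms of the monomials $x^\nu$ is then automatic, being a formal consequence of the bracket condition.

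For the converse, let $(B,\frakg)$ be finitely generated and almost commutative. By hypothesis $\frakg$ is an $A$-module closed under the commutator $[x,y]=xy-yx$ of $B$, hence a Lie algebra over $A$, and the inclusion $j:\frakg\hookrightarrow B$ is a homomorphism of $\frakg$ into the canonical Lie algebra structure of $B$. The universal property of $U(\frakg)$ therefore yields a unique homomorphism of unital $A$-algebras $h:U(\frakg)\to B$ with $h\circ\iota=j$. Its image is an $A$-subalgebra of $B$ containing $j(\frakg)=\frakg$, hence containing $\sum_{n\geq 0}\frakg^n=B$; thus $h$ is surjective and $B\cong U(\frakg)/\ker h$ is a quotient of $U(\frakg)$.

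There is essentially no serious obstacle: both directions are bookkeeping around the universal property. The one point deserving care in the converse is that $\frakg$, in the definition of an almost commutative algebra, is presented as an abstract $A$-submodule closed under the commutator, so one must first recognize it as a Lie algebra over $A$ before forming $U(\frakg)$ and invoking the universal property; after that, surjectivity of $h$ is forced by the generation condition $B=\sum_{n\geq 0}\frakg^n$.
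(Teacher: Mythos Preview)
Your proof is correct and follows precisely the approach the paper indicates: the lemma is stated there without proof, preceded only by the remark ``By means of the universal property of universal enveloping algebras we obtain the following characterization.'' Your argument simply unpacks this, using the universal property for the converse direction and the description $U(\frakg)=\sum_{n\ge 0}\iota(\frakg)^n$ for the forward direction, which is exactly what the paper has in mind.
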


Theorem~\ref{comm_thm:Q0_for_almost_commutative_algebras}
generalizes the generic flatness lemma of algebraic geometry, compare
Lemma~6.7 in~\cite{Groth1} Expos\'e IV of SGA~1960-61. Grothendieck's
proof of generic freeness for modules over finitely generated
commutative algebras uses the Noether normalization Lemma and
Krull dimension theory. Here we restate a more elementary proof
due to Dixmier~\cite{Dix7} and Duflo~\cite{Duflo5} only relying on
the fact that the localization functor is exact.\\\\
A filtration of an $A$-module $M$ is a sequence $M_n$ of
$A$-submodules of $M$ such that $M_{-1}=\{0\}$,
$M_{n-1}\subset M_n$ for all $n\ge 0$, and
$M=\sum_{n=0}^\infty M_n$.

\begin{lem}\label{comm_lem:main_trick}
Let $A$ be a commutative ring and $M$ a left $A$-module endowed
with a filtration $\{M_n:n\ge -1\}$. Suppose that $f\in A$ is
non-nilpotent such that $f\mdot M_n\subset M_{n-1}$ whenever
$M_n/M_{n-1}$ is not free. Then $A_f\otimes_A M$ is a free
$A_f$-module.
\end{lem}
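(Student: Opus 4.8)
The plan is to localize the given filtration at $f$ and exploit the exactness of localization. Applying the exact functor $(-)_f=A_f\otimes_A(-)$ to the chain $M_{-1}\subset M_0\subset M_1\subset\cdots$ produces a chain of $A_f$-submodules $(M_{-1})_f\subset(M_0)_f\subset(M_1)_f\subset\cdots$ of $M_f$ (each inclusion stays injective by flatness), with $\bigcup_n(M_n)_f=M_f$ since localization commutes with the union $M=\sum_{n}M_n$, and with successive quotients $(M_n)_f/(M_{n-1})_f\cong(M_n/M_{n-1})_f$ again by exactness.

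Next I would check that every successive quotient $(M_n/M_{n-1})_f$ is a \emph{free} $A_f$-module. There are two cases. If $M_n/M_{n-1}$ is free over $A$, say isomorphic to $A^{(I)}$, then its localization is $A_f^{(I)}$, which is free, because tensoring commutes with direct sums. If $M_n/M_{n-1}$ is not free, then by hypothesis $f\mdot M_n\subset M_{n-1}$, i.e. $f$ annihilates $M_n/M_{n-1}$; but $f$ is a unit in $A_f$, so the localization $(M_n/M_{n-1})_f$ vanishes, and the zero module is free on the empty basis. In either case the quotient is free over $A_f$.

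It then remains to invoke the general fact: if an $A_f$-module $N$ carries an exhausting filtration $0=N_{-1}\subset N_0\subset N_1\subset\cdots$ with $N=\bigcup_n N_n$ and each $N_n/N_{n-1}$ free, then $N$ is free. For each $n$ one chooses a subset $\widetilde B_n\subset N_n$ mapping bijectively onto a basis of $N_n/N_{n-1}$; such lifts lie outside $N_{n-1}$, so the $\widetilde B_n$ are pairwise disjoint and $\widetilde B=\bigsqcup_n\widetilde B_n$ is well defined. An induction on $n$ shows that $\widetilde B_0\cup\cdots\cup\widetilde B_n$ generates $N_n$: given $x\in N_n$, subtract from $x$ a suitable $A_f$-combination of $\widetilde B_n$ chosen to match the image of $x$ in the basis of $N_n/N_{n-1}$, landing in $N_{n-1}$, and apply the inductive hypothesis. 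Hence $\widetilde B$ generates $N=\bigcup_n N_n$. For linear independence, take a finite relation $\sum_i a_i b_i=0$ with $b_i\in\widetilde B$, let $m$ be the largest index with some $b_i\in\widetilde B_m$, and reduce modulo $N_{m-1}$: one obtains a relation among the corresponding basis elements of $N_m/N_{m-1}$, forcing those coefficients $a_i$ to vanish; downward induction on $m$ kills all the $a_i$. Applying this to $N=M_f$ with $N_n=(M_n)_f$ completes the proof.

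The argument is essentially routine once the reduction is set up; the only step requiring genuine care is the linear-independence claim in the general lemma, where one must order the terms of a relation by filtration degree and peel them off from the top. The observation that the empty basis makes the zero module free is what lets the "not free" case — handled via $f$ becoming invertible — slot in with no extra work.
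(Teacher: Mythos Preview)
Your proof is correct and follows essentially the same route as the paper: localize the filtration, use exactness of $A_f\otimes_A(-)$ to identify the successive quotients, handle the two cases (free $\Rightarrow$ localizes to free; not free $\Rightarrow$ $f$ kills the quotient so its localization vanishes), and conclude from the fact that an exhaustive filtration with free graded pieces yields a free module. The only difference is cosmetic: the paper works with the images $L_n$ of $A_f\otimes_A M_n$ in $A_f\otimes_A M$ rather than identifying $(M_n)_f$ with a submodule, and it leaves the final step (``filtration with free quotients implies free'') unproved, whereas you spell out the basis-lifting argument in full.
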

\begin{proof}
The images $L_n$ of the canonical injections
$\iota_n:A_f\otimes_A M_n\to A_f\otimes_A M$ form a filtration
of the $A_f$-module $A_f\otimes_A M$. We claim that $L_n/L_{n-1}$
is free over $A_f$ for all $n\ge 0$: First we treat the case
$M_n/M_{n-1}$ not free. Then
\[r\otimes m=(rf^{-1})\otimes (f\mdot m)\in L_{n-1}\]
for all $r\in A_f$ and $m\in M_n$. This proves $L_n=L_{n-1}$ so that
$L_n/L_{n-1}=0$. Next we assume that $M_n/M_{n-1}\cong A^{(I)}$ is
free. As $A_f\otimes_A -$ commutes with direct sums, it follows that
$A_f\otimes_A(M_n/M_{n-1})\cong A_f\otimes_A A^{(I)}\cong A_f^{(I)}$
is a free $A_f$-module. Since $A_f$ is flat, it follows that
$L_n/L_{n-1}\cong A_f\otimes_A(M_n/M_{n-1})$ is free over $A_f$
for all $n\ge 0$. This shows that $A_f\otimes_A M$ is free.
\end{proof}

The next theorem is due to Duflo, see Th\'eor\`eme 2 of~\cite{Duflo5}.
The idea of the proof goes back to Dixmier~\cite{Dix7}. Compare also
Lemme~2.6.3 of Dixmier's book~\cite{Dix6}. The proof is a bit
technical and needs some preparation. For $d\ge 1$ we interpret
$\mN^d$ as the set of all multi-indices. The order of $\nu\in\mN^d$
is given by $|\nu|=\sum_{j=1}^d\nu_j$. Let $\le$ denote the unique
total order on $\mN^d$ satisfying the following two properties:
\begin{enumerate}
\item If $\mu,\nu\in\mN^d$ and $|\mu|<|\nu|$, then $\mu\le\nu$.
\item If $\mu,\nu\in\mN^d$ such that $|\mu|=|\nu|$ and if there
exists $1\le k\le n$ such that $\mu_j=\nu_j$ for $1\le j\le k-1$
and $\mu_k<\nu_k$, then $\mu\le\nu$.
\end{enumerate}

\noindent As a poset $(\mN^d,\le)$ is isomorphic to $(\mN,\le)$
in a canonical way. By abuse of notation we write $\nu-1$ for
the maximum of the set of all $\mu\in\mN^d$ such that $\mu\le\nu$
and $\mu\neq\nu$, provided that $\nu\neq 0$.

\begin{thm}\label{comm_thm:Q0_for_almost_commutative_algebras}
Let $A$ be an integral domain and $(B,\frakg)$ a finitely generated
almost commutative $A$-algebra. Then the following enhancement
of~\textbf{(Q0)} holds true: If $M$ a cyclic $B$-module, then
there is a non-zero $f\in A$ such that $A_f\otimes M$ is a
free $A_f$-module.
\end{thm}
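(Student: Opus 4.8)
The plan is to combine the almost-commutative filtration on $B$ with the generic-freeness trick of Lemma~\ref{comm_lem:main_trick}. Write $M=B\mdot m_0$ for a generator $m_0$, and fix generators $x_1,\ldots,x_d$ of the finitely generated $A$-module $\frakg$. The standard filtration $B_n=\sum_{j=0}^n\frakg^j$ of $B$ induces a filtration $M_n=B_n\mdot m_0$ of $M$ with $M_{-1}=0$ and $M=\bigcup_n M_n$. Because $(B,\frakg)$ is almost commutative, each quotient $M_n/M_{n-1}$ is generated as an $A$-module by the images of the ``ordered monomials'' $x^\nu\mdot m_0$ with $|\nu|=n$; the key point is that commutators $[x_i,x_j]$ lie in $\frakg\subset B_1$, so reordering any product of $n$ of the $x_i$ costs only lower-order terms, and hence $B_n$ is spanned mod $B_{n-1}$ by the $x^\nu$ with $|\nu|=n$. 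So $M_n/M_{n-1}$ is a finitely generated $A$-module for each $n$.

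Next I would refine this filtration to a filtration indexed by all of $\mN^d$ (equivalently by $\mN$, via the total order $\le$ introduced above): put $M_{(\nu)}=\sum_{\mu\le\nu}A\mdot(x^\mu\mdot m_0)$. This is an increasing filtration of $M$ by finitely generated $A$-submodules whose successive quotients $M_{(\nu)}/M_{(\nu-1)}$ are cyclic $A$-modules, each generated by the image of $x^\nu\mdot m_0$, hence of the form $A/I_\nu$ for an ideal $I_\nu$ of $A$. A cyclic $A$-module $A/I_\nu$ is free over $A$ precisely when $I_\nu=0$ (using that $A$ is a domain, so $A/I_\nu$ is torsion-free iff $I_\nu=0$), and in that case it is free of rank $1$; otherwise it is a torsion module killed by any nonzero element of $I_\nu$. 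For each $\nu$ with $M_{(\nu)}/M_{(\nu-1)}$ not free, choose $0\neq f_\nu\in I_\nu$, so that $f_\nu\mdot(x^\nu\mdot m_0)\in M_{(\nu-1)}$, i.e. $f_\nu\mdot M_{(\nu)}\subset M_{(\nu-1)}$.

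The main obstacle is that there may be infinitely many non-free steps, and Lemma~\ref{comm_lem:main_trick} wants a single $f$ with $f\mdot M_{(\nu)}\subset M_{(\nu-1)}$ at every non-free step; a naive infinite product of the $f_\nu$ need not exist in $A$. The resolution is the almost-commutativity bound combined with the fact that $M$ is cyclic: I claim only finitely many steps are non-free. Indeed, once $x^\mu\mdot m_0\in M_{(\mu-1)}$ for a single multi-index $\mu$, one shows, by pushing the relation through the ordered-monomial reordering, that $x^\nu\mdot m_0\in M_{(\nu-1)}$ for every $\nu\ge\mu$ — intuitively, the $A$-module $M$ is then generated by the $x^\mu\mdot m_0$ with $\mu$ in the finite ``staircase'' complement of the ideal generated by $\mu$, so the filtration stabilizes modulo finitely generated pieces and there are only finitely many $\nu$ at which a genuinely new free generator appears. (This is exactly the point where one uses that $\mN^d$, ordered by divisibility, has no infinite antichain — Dickson's lemma — so the set of ``minimal'' non-free indices is finite.) Hence the set $S$ of $\nu$ for which $M_{(\nu)}/M_{(\nu-1)}$ is non-free is finite, and I may take $f=\prod_{\nu\in S}f_\nu$, a nonzero element of the domain $A$. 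Now Lemma~\ref{comm_lem:main_trick} applies verbatim to the $\mN$-indexed filtration $\{M_{(\nu)}\}$ and yields that $A_f\otimes_A M$ is a free $A_f$-module, which is the assertion. Finally, to recover the original statement of \textbf{(Q0)} one notes that a simple $B[X]$-module, viewed over $A[X]$-algebra $B[X]$, is in particular cyclic, so the cyclic case already implies generic freeness for simple modules.
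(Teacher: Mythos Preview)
Your setup coincides with the paper's: the monomial filtration $M_{(\nu)}=\sum_{\mu\le\nu}A\mdot x^\mu\mdot m_0$, the cyclic quotients $M_{(\nu)}/M_{(\nu-1)}\cong A/I_\nu$, and the appeal to Lemma~\ref{comm_lem:main_trick}. The gap is the sentence ``Hence the set $S$ of $\nu$ for which $M_{(\nu)}/M_{(\nu-1)}$ is non-free is finite.'' This is false in general. Take $A=k[t]$, $\frakg=A\mdot x$ abelian, $B=A[x]$, and $M=B/(tx)$: then $I_0=0$ while $I_n=(t)$ for every $n\ge 1$, so infinitely many steps are non-free. Your argument conflates two different statements: you prove (correctly) that if $I_\mu=A$ then $I_\nu=A$ for all $\nu\ge\mu$ componentwise, and you quote Dickson's lemma to say the set of \emph{minimal} non-free indices is finite; neither of these yields that the set of \emph{all} non-free indices is finite.

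What is missing is the monotonicity of the ideals themselves, not just of the condition $I_\mu=A$. The paper carries out the commutator computation
\[
a\mdot x^{\nu+e_j}\mdot m_0=x_j\mdot(a\mdot x^\nu\mdot m_0)+a\mdot[x_1^{\nu_1}\cdots x_{j-1}^{\nu_{j-1}},x_j]\,x_j^{\nu_j}\cdots x_d^{\nu_d}\mdot m_0
\]
to show $I_\nu\subset I_{\nu+e_j}$ for every $j$, hence $I_\mu\subset I_\nu$ whenever $\mu\le\nu$ componentwise. Combined with Dickson's lemma this gives a finite set $\Lambda_0$ of minimal elements of $\Lambda=\{\nu:I_\nu\neq 0\}$ with $\bigcap_{\nu\in\Lambda}I_\nu=\bigcap_{\gamma\in\Lambda_0}I_\gamma$; the right-hand side is a finite intersection of nonzero ideals in a domain, hence nonzero. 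Any $f$ in this intersection satisfies $f\mdot M_{(\nu)}\subset M_{(\nu-1)}$ for every $\nu\in\Lambda$, and Lemma~\ref{comm_lem:main_trick} then applies. So your product $\prod_{\nu\in S}f_\nu$ should be replaced by $\prod_{\gamma\in\Lambda_0}f_\gamma$, and the justification that this single $f$ works at \emph{all} non-free steps is precisely the inclusion $I_\gamma\subset I_\nu$ that you did not establish.
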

\begin{proof}
Let $M$ be a cyclic $B$-module and $\xi\in M$ with $M=B\mdot\xi$. Let
$x_1,\ldots,x_d$ be generators of the $A$-submodule $\frakg$ of $B$. As
usual we write $x^\nu=x_1^{\nu_1}\ldots x_d^{\nu_d}$ for $\nu\in\mN^d$.
Since $B=\sum_{n=0}^\infty \frakg^n=\sum_{\nu\in\mN^d}A\mdot x^\nu$,
it is evident that $M_\nu=\sum_{\mu\le\nu}A\mdot x^\mu\mdot\xi$ 
defines a filtration of the $A$-module $M$ such that $M_\nu/M_{\nu-1}$ is
cyclic for all $\nu$. Let us consider the ideals
\[I_\nu=\{a\in A:a\mdot M_\nu\subset M_{\nu-1}\}=\{a\in A:a\mdot x^\nu
\mdot\xi\in M_{\nu-1}\}\]
of $A$. Clearly $M_\nu/M_{\nu-1}\cong A$ is free provided that
$I_\nu=0$. Let $\Lambda=\{\nu\in\mN^d:I_\nu\neq 0\}$. In view of
Lemma~\ref{comm_lem:main_trick} it now suffices to prove that
the ideal $J=\bigcap\,\{I_\nu:\nu\in\Lambda\}$ is non-zero. The
problem is to treat the case where $\Lambda$ happens to be infinite.\\\\
If $\nu\in\mN^d$ is arbitrary, $e_j\in\mN^d$ is the $j^{th}$ canonical
basis vector, and $a\in I_\nu$, then
\[a\mdot x^{\nu+e_j}\mdot\,\xi=x_j\mdot a\mdot x^\nu\mdot\,\xi+
a\mdot\,[x_1^{\nu_1}\ldots x_{j-1}^{\nu_{j-1}},x_j]\,x_j^{\nu_j}\ldots
x_d^{\nu_d}\,\mdot\,\xi\;\in M_{\nu+e_j-1}\]
which shows $a\in I_{\nu+e_j}$. Thus $I_\nu\subset I_{\nu+\alpha}$ for
all $\alpha\in\mN^d$ and hence $\Lambda+\mN^d=\Lambda$. Let $\Lambda_0$
denote the set of all $\gamma\in\Lambda$ such that $\gamma\neq\nu+e_j$
for all $\nu\in\Lambda$ and $1\le j\le n$. It is easy to see that
$\Lambda_0$ is a finite subset of $\Lambda $ such that
$\Lambda=\Lambda_0+\mN^d$.
Since $A$ is an integral domain and $I_\nu\subset I_{\nu+\alpha}$,
it follows that $J=\bigcap\{I_\gamma:\gamma\in\Lambda_0\}$ is non-zero.
For every $0\neq f\in J$ we have $f\mdot M_\nu\subset M_{\nu-1}$ for
all $\nu\in\Lambda$. This completes the proof.
\end{proof}

The preceding theorem can be generalized to the case of finitely
generated $B$-modules. We omit the details.\\\\
The next result appears as a corollary of the preceding achievements.
It is a generalization of a result of Krull in the commutative case,
see Satz~1 and Satz~2 of~\cite{Krull}. Compare also Theorem~3 of
Goldmann~\cite{Goldman}.

\begin{thm}\label{comm_thm:almost_commutative_algebras_are_Jacobson}
Let $k$ be a field and $(B,\frakg)$ a finitely generated almost
commutative $k$-algebra. Then $B$ satisfies condition~\textbf{(D)}.
In particular $B$ is a Jacobson algebra and the commutant
$\End_B(M)$ of any simple $B$-module is algebraic over $k$.
\end{thm}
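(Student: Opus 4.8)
The plan is to chain together the results already established in the paper. The theorem has three assertions: (1) $B$ satisfies \textbf{(D)}; (2) $B$ is Jacobson; (3) the commutant of any simple $B$-module is algebraic over $k$. The first is the substantive step; the other two are formal consequences.

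For assertion (1), the key observation is that \textbf{(D)} concerns simple $B[X]$-modules, and $B[X] = k[X] \otimes_k B$ is again a finitely generated almost commutative algebra, this time over the integral domain $A = k[X]$: indeed if $(B,\frakg)$ satisfies the almost-commutativity conditions over $k$, then $(B[X], \frakg)$ satisfies them over $k[X]$, with the same finite generating set $x_1,\ldots,x_d$ of $\frakg$. So I would apply Theorem~\ref{comm_thm:Q0_for_almost_commutative_algebras} with $A = k[X]$ and the algebra $B[X]$: any cyclic --- in particular any simple --- $B[X]$-module $M$ becomes generically free over $k[X]$, i.e.\ there is a non-zero $f \in k[X]$ with $M_f$ free over $k[X]_f$. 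This is precisely condition \textbf{(Q0)} for $B$. Then Theorem~\ref{comm_thm:Q0_implies_D} gives that \textbf{(Q0)} implies \textbf{(D)}, so $\varphi(m) = X\mdot m$ is algebraic over $k$ for every simple $B[X]$-module $M$. The one point requiring a line of justification is that $B$ (hence $B[X]$) is left noetherian, so that the machinery applies where needed downstream; this follows from almost commutativity since $B$ is a quotient of $U(\frakg)$ with $\frakg$ finitely generated over $k$, and the associated graded of $U(\frakg)$ is a polynomial ring, hence noetherian.

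Assertion (2) is then immediate: $B$ is left noetherian and satisfies \textbf{(D)}, which is \textbf{(E)} with $A = k$ (algebraic over $k$ means integral over $k$, as $k$ is a field); so Theorem~\ref{comm_thm:E_implies_Jacobson_property} applies and $B$ is a Jacobson ring. For assertion (3), let $M$ be a simple $B$-module and $\psi \in \End_B(M)$. Make $M$ into a $B[X]$-module by letting $X$ act as $\psi$; since $\psi$ commutes with the $B$-action, this is well-defined, and $M$ is simple as a $B[X]$-module because any $B[X]$-submodule is a $\psi$-invariant $B$-submodule, hence $0$ or $M$. By \textbf{(D)}, $\varphi(m) = X\mdot m = \psi(m)$ is algebraic over $k$, i.e.\ $\psi$ satisfies a non-trivial polynomial over $k$. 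As $\psi$ was arbitrary, $\End_B(M)$ is algebraic over $k$.

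The only mild obstacle is the bookkeeping in the first paragraph: verifying carefully that $B[X]$ is finitely generated almost commutative over $k[X]$ (the submodule $\frakg \subset B$ is still a $k[X]$-submodule of $B[X]$ closed under brackets, finitely generated as a $k[X]$-module, and $B[X] = \sum_n \frakg^n$ over $k[X]$ since $B[X] = k[X]\otimes_k B$ and $B = \sum_n \frakg^n$), and invoking the left-noetherian property. Everything else is a direct citation. I would write the proof in essentially the order above: reduce \textbf{(D)} to \textbf{(Q0)} for $B$ via the $k[X]$-structure on $B[X]$; quote Theorem~\ref{comm_thm:Q0_for_almost_commutative_algebras} and Theorem~\ref{comm_thm:Q0_implies_D}; then deduce Jacobson from Theorem~\ref{comm_thm:E_implies_Jacobson_property}; then deduce the commutant statement by the standard trick of letting an endomorphism act as $X$.
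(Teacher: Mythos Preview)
Your proof is correct and follows essentially the same route as the paper: apply Theorem~\ref{comm_thm:Q0_for_almost_commutative_algebras} to the $k[X]$-algebra $B[X]$ to obtain \textbf{(Q0)}, deduce \textbf{(D)} via Theorem~\ref{comm_thm:Q0_implies_D}, get the Jacobson property from Theorem~\ref{comm_thm:E_implies_Jacobson_property}, and derive the commutant statement by letting an arbitrary $\psi\in\End_B(M)$ act as $X$. One small slip in your bookkeeping: $\frakg$ itself is \emph{not} a $k[X]$-submodule of $B[X]$ (multiplying an element of $\frakg\subset B$ by $X$ lands outside $\frakg$); the Lie subalgebra you want is $\frakg[X]=k[X]\otimes_k\frakg$, which is indeed a $k[X]$-submodule of $B[X]$, finitely generated over $k[X]$ by $x_1,\ldots,x_d$, closed under the bracket, and satisfies $B[X]=\sum_n(\frakg[X])^n$.
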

\begin{proof}
By Theorem~\ref{comm_thm:Q0_for_almost_commutative_algebras}
we know that $B$ has property~\textbf{(Q0)}. Theorem~
\ref{comm_thm:Q0_implies_D} implies~\textbf{(D)}. Thus $B$ is
Jacobson by Theorem~\ref{comm_thm:E_implies_Jacobson_property}.
Since any $\varphi\in\End_B(M)$ gives rise to a simple
$B[X]$-module, it follows by~\textbf{(D)} that $\varphi$
is algebraic over $k$.
\end{proof}

\begin{lem}
Let $k$ be an algebraically closed field and $B$ be a finitely
generated commutative $k$-algebra. If $I$ is a primitive
(i.e.\ maximal) ideal of $B$, then there exists a homomorphism
$\chi:B\to k$ such that $I=\ker\chi$.
\end{lem}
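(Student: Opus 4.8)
The plan is to combine the classical Nullstellensatz with the results already established. Since $k$ is algebraically closed and $B=k[X_1,\dots,X_n]/\mcJ$ for some ideal $\mcJ$, a maximal ideal $I$ of $B$ pulls back to a maximal ideal $\widetilde I$ of $k[X_1,\dots,X_n]$ containing $\mcJ$. By Hilbert's Nullstellensatz over the algebraically closed field $k$, every maximal ideal of the polynomial algebra has the form $I_\lambda=\{p:p(\lambda)=0\}$ for some $\lambda\in k^n$, so evaluation at $\lambda$ gives a homomorphism $k[X_1,\dots,X_n]\to k$ with kernel $\widetilde I$; since $\mcJ\subset\widetilde I$, this factors through $B$ to yield the desired $\chi:B\to k$ with $\ker\chi=I$.

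However, the author has deliberately been developing a self-contained account that does not simply quote the Nullstellensatz, so I would instead give the argument that \emph{derives} this form of the Nullstellensatz from condition~\textbf{(D)}, which is the whole point of the preceding development. First I would observe that $B/I$ is a field (by the lemma on commutative rings, primitive ideals are maximal, so $I$ maximal means $B/I$ is a simple commutative ring, hence a field) that is finitely generated as a $k$-algebra. It then suffices to show $B/I=k\mdot 1$, i.e.\ that the finitely generated field extension $B/I$ over $k$ equals $k$; then the quotient map $B\to B/I=k$ is the sought homomorphism $\chi$.

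To see $B/I=k$: by Theorem~\ref{comm_thm:almost_commutative_algebras_are_Jacobson} (applied with $\frakg=0$, so that $B$ itself is finitely generated almost commutative), $B$ satisfies condition~\textbf{(D)}, and in particular the commutant $\End_B(M)$ of any simple $B$-module $M$ is algebraic over $k$. Take $M=B/I$, a simple $B$-module. Since $B$ is commutative, multiplication by any element $\bar b\in B/I$ is a $B$-module endomorphism of $M$, so $B/I$ embeds into $\End_B(M)$; hence every element of $B/I$ is algebraic over $k$. As $k$ is algebraically closed and $B/I$ is a field algebraic over $k$, we conclude $B/I=k$. The composite $\chi:B\twoheadrightarrow B/I=k$ has kernel exactly $I$, which proves the lemma.

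The main obstacle — really the only nontrivial point — is knowing that the commutant $\End_B(M)$ of the simple module $M=B/I$ is algebraic over $k$; this is precisely what the chain \textbf{(Q0)} $\Rightarrow$ \textbf{(D)} $\Rightarrow$ (commutant algebraic) in Theorem~\ref{comm_thm:Q0_for_almost_commutative_algebras}, Theorem~\ref{comm_thm:Q0_implies_D}, and Theorem~\ref{comm_thm:almost_commutative_algebras_are_Jacobson} was built to supply, so no new work is needed. Everything else is a routine unwinding: checking that a maximal ideal of a commutative ring gives a simple module, that scalar multiplications are module endomorphisms in the commutative case, and that an algebraic field extension of an algebraically closed field is trivial.
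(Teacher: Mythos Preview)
Your proof is correct and follows essentially the same route as the paper: embed $B/I$ into $\End_B(M)$ via multiplication, invoke Theorem~\ref{comm_thm:almost_commutative_algebras_are_Jacobson} to see the commutant is algebraic over $k$, and use that $k$ is algebraically closed to force $B/I=k$. One small slip: the parenthetical ``applied with $\frakg=0$'' is wrong, since then $B=\sum_n\frakg^n$ would collapse to $k$; the correct choice is to take $\frakg$ to be the finite-dimensional $k$-span of a generating set of $B$, which satisfies $[\frakg,\frakg]=0\subset\frakg$ because $B$ is commutative.
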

\begin{proof}
Let $M$ be a simple $B$-module with $I=\Ann_B(M)$. Observe
that $B/I$ can be regarded as a subring of $\End_B(M)$
because $B$ is commutative. Since $k$ is algebraically
closed and $\End_B(M)$ is algebraic over $k$ by Theorem~
\ref{comm_thm:almost_commutative_algebras_are_Jacobson},
it follows $B/I\cong k$. Thus the canonical projection
$\chi:B\to B/I\cong k$ has the desired properties.
\end{proof}

Finally, applying these results to the commutative algebra
$B=k[X_1,\ldots,X_n]$, where $k$ algebraically closed, we
rediscover Hilbert's Nullstellensatz: Any semiprime ideal
$I=\sqrt{I}$ of $B$ is equal to the intersection of the
maximal ideals containing it. As the homomorphisms
$\chi:B\to k$ are known in this case, every maximal
ideal is of the form $I_\lambda=\{\;p\in B:p(\lambda)=0\;\}$
for some $\lambda\in k^n$.

\begin{thm}
Let $A$ be a commutative noetherian Jacobson ring and $B$
a finitely generated almost commutative $A$-algebra. Then
$B$ is a Jacobson algebra and the commutant $\End_B(M)$
of any simple $B$-module is integral over $A$.
\end{thm}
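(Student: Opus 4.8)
The plan is to reduce both assertions to condition~\textbf{(E)} for the $A$-algebra $B$ and then invoke Theorem~\ref{comm_thm:E_implies_Jacobson_property}. I first note that $B$ is left noetherian: the associated graded ring of $B$ with respect to the filtration $F_nB=A+\frakg+\cdots+\frakg^n$ is commutative and is generated over $A$ by the images of the finitely many generators of the $A$-module $\frakg$, hence is a finitely generated commutative $A$-algebra, so left noetherian by the Hilbert basis theorem; and a positively filtered ring whose associated graded ring is left noetherian is itself left noetherian. Granting \textbf{(E)}, Theorem~\ref{comm_thm:E_implies_Jacobson_property} yields that $B$ is a Jacobson ring, and the commutant statement follows as well: for a simple $B$-module $M$ and $\varphi\in\End_B(M)$, the module $M$ has no proper non-zero $\varphi$-invariant $B$-submodule, so it is a simple $B[X]$-module with $X$ acting as $\varphi$, and \textbf{(E)} then forces $\varphi$ to be integral over $A$; since $\varphi$ was arbitrary, $\End_B(M)$ is integral over $A$.

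To establish \textbf{(E)}, I would fix a simple $B[X]$-module $M$, put $P=\Ann_{B[X]}(M)$ and $\frakp=A\cap P=\Ann_A(M)$, a prime ideal of $A$, and use the reduction explained right after the statement of \textbf{(E)}: $X\mdot m$ is integral over $A$ if and only if it is integral over $A/\frakp$. Moreover $\frakp\mdot M=0$ gives $\frakp B[X]\subset P$, so $M$ is a simple $(B/\frakp B)[X]$-module, and $(B/\frakp B,\bar\frakg)$, with $\bar\frakg$ the image of $\frakg$, is a finitely generated almost commutative $A/\frakp$-algebra. As $A/\frakp$ is again a noetherian Jacobson ring, one may replace the triple $(A,B,M)$ by $(A/\frakp,B/\frakp B,M)$ and so assume from now on that $A$ is an integral domain and $M$ a faithful $A$-module. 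If $A$ is a field, then $B$ is a finitely generated almost commutative algebra over a field, so $X\mdot m$ is algebraic — hence integral — over $A$ by Theorem~\ref{comm_thm:almost_commutative_algebras_are_Jacobson}; thus one may further assume $A$ is not a field.

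The heart of the argument then follows the pattern of Theorem~\ref{comm_thm:Q0_implies_D}. Since $X$ is central in $B[X]$, the pair $(B[X],\frakg+AX)$ is a finitely generated almost commutative $A$-algebra, and $M=B[X]\mdot\xi$ is cyclic, so Theorem~\ref{comm_thm:Q0_for_almost_commutative_algebras} produces a non-zero $f\in A$ with $M_f=A_f\otimes_A M$ free over $A_f$. One checks $f^n\mdot\xi\neq0$ for all $n$ — otherwise $f^n\mdot M=0$, forcing $f^n=0$ as $M$ is faithful and $A$ a domain — so the $B[X]$-submodule $\{m:f^n\mdot m=0\text{ for some }n\}$ is proper, hence zero, and $M$ embeds into the non-zero module $M_f$. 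Now the key point: since $A$ is a Jacobson domain that is not a field, its Jacobson radical is $0$ and all its maximal ideals are non-zero, so the non-zero element $f$ cannot lie in every non-zero prime of $A$ (else it would lie in every maximal ideal, hence in $J(A)=0$); pick a prime $\frakq\neq0$ with $f\notin\frakq$ and an element $0\neq g\in\frakq$. Then $g$ is a non-unit of $A_f$, whereas $g\mdot M$, being a non-zero $B[X]$-submodule of the simple module $M$, equals $M$, so multiplication by $g$ is surjective on $M$ and, after applying the right-exact functor $A_f\otimes_A-$, also on $M_f$; but multiplication by a non-unit on a non-zero free $A_f$-module has non-zero cokernel. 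This contradiction proves that $X\mdot m$ is integral over $A$, establishing \textbf{(E)} and finishing the proof.

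The main obstacle, as indicated, is this last step: extracting an element $g$ that is a non-unit of the localization $A_f$. This is exactly where the Jacobson hypothesis on $A$ is used, and it plays the role taken by Lemma~\ref{comm_lem:finitely_many_irreducibles} — the existence of infinitely many monic irreducibles in $k[X]$ — in the proof of Theorem~\ref{comm_thm:Q0_implies_D}.
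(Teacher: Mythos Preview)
Your argument is correct and arrives at the same endpoint as the paper --- both reduce to verifying \textbf{(E)}, pass to the domain $\bar A=A/(A\cap P)$, invoke Theorem~\ref{comm_thm:Q0_for_almost_commutative_algebras} to obtain $M_f$ free over $\bar A_f$, and ultimately show that $\bar A$ must be a field so that Theorem~\ref{comm_thm:almost_commutative_algebras_are_Jacobson} finishes the job --- but the two proofs diverge at the decisive step. The paper follows Duflo: it first proves directly that $\bar A_f$ is a field by noting that every non-zero $a\in\bar A_f$ acts invertibly on $M_f$ (Schur) and then reading off an inverse of $a$ from a free $\bar A_f$-basis of $M_f$; the Jacobson hypothesis on $A$ is invoked only afterwards, to descend from ``$\bar A_f$ is a field'' to ``$\bar A$ is a field'' via a maximal ideal of $\bar A$ avoiding $f$. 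You run the contrapositive in the style of Theorem~\ref{comm_thm:Q0_implies_D}: assuming $\bar A$ is not a field, you use the Jacobson hypothesis \emph{first} to produce a non-zero non-unit $g\in\bar A_f$, and then derive a contradiction from the tension between $g$ acting surjectively on the simple module $M$ (hence on $M_f$) and non-surjectively on a non-zero free $\bar A_f$-module. Your route parallels Quillen's argument more closely and bypasses the basis-reading trick; the paper's route has the virtue of isolating the intermediate statement that $\bar A_f$ is a field, which is Duflo's Proposition~1 and of some independent interest. One small expository point: your sentence ``This contradiction proves that $X\mdot m$ is integral over $A$'' is a shortcut --- what the contradiction actually shows is that the non-field case cannot occur, throwing you back to the field case you already handled.
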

\begin{proof}
First of all, $B$ is known to be left noetherian. According
to Theorem~\ref{comm_thm:E_implies_Jacobson_property} we must
verify~\textbf{(E)}. Let $M$ be a simple $B[X]$-module with
annihilator $P=\Ann_B(M)$ and $\varphi(m)=X\mdot m$. As we
have already seen, it suffices to prove that $\varphi$ is
integral over the integral domain $\bar{A}=A/A\cap P$. For
the convenience of the reader we reproduce the proof of
Proposition~1 of~\cite{Duflo5} which states that $\bar{A}$
is a field.\\\\
By Theorem~\ref{comm_thm:Q0_for_almost_commutative_algebras}
there exists a non-zero $f\in\bar{A}$ such that
$M_f=\bar{A}_f\otimes_{\bar{A}}M$ is a free $\bar{A}_f$-module.
Suppose that the natural map $\eta:M\to M_f$, $\eta(m)=1\otimes m$,
has a non-trivial kernel $\ker\eta=T_{S_f}(M)\neq 0$. Then
there exists a $k\ge 1$ and a non-zero $m\in M$ such that
$f^k\mdot m=0$. Since $M$ is simple, it follows
$f^k\mdot M=0$. Hence $f^k=0$. For $\bar{A}$ has no zero
divisors, we get $f=0$, a contradiction. Thus $\eta$ is
injective. In particular $M_f$ is non-zero.\\\\
First we prove that $\bar{A}_f$ is a field. Let $a\in\bar{A}_f$
be non-zero. Clearly $\mu_a:M_f\to M_f$, $\mu_a(m)=a\mdot m$,
is a non-zero element of $\End_B(M)$. By Schur's Lemma $\mu_a$
is invertible. Let $\{m_i:i\in I\}$ be a basis of $M_f$ and
choose $i\in I$. One can find $b,b_j\in\bar{A}_f$ such that
\[\mu_a^{-1}(m_i)=b\mdot m_i+\sum_{j\neq i}b_j\mdot m_j\;.\]
Applying $\mu_a$ to both sides and comparing coefficients,
we obtain $ab=1$ so that $a$ is invertible in $\bar{A}_f$.
Consequently $\bar{A}_f$ is a field.\\\\
Since $\bar{A}$ is a Jacobson ring, there exists a maximal ideal
$I$ of $\bar{A}$ such that $f\not\in I$. As $f+I$ is invertible
in $\bar{A}/I$, there exists a unique ring homomorphism
$\psi:\bar{A}_f\to \bar{A}/I$ such that $\psi(x)=x+I$ for
all $x\in\bar{A}$. It follows that $\psi$ is injective
because $\bar{A}_f$ is a field. This proves $I=0$. Thus
$\bar{A}$ itself is a field.\\\\
Since $\bar{B}=B/P$ is an almost commutative $\bar{A}$-algebra,
Theorem~\ref{comm_thm:Q0_for_almost_commutative_algebras}
implies that $\varphi$ is algebraic over $\bar{A}$.
This proves~\textbf{(E)}.
\end{proof}

In respect of Irving's condition \textbf{(I)} we state the following
generic freeness property.
\begin{itemize}
\item[\textbf{(Q1)}]If $M$ is a simple $B[X]$-module and
$\varphi\in\End_{B[X]}(M)$, then $Y\mdot m=\varphi(m)$ defines a
generically free $k[Y]$-module.
\end{itemize}

In the situation of \textbf{(Q1)} we can regard $M$ as a simple
module over the $k$-algebra $B[X,Y]=B[X][Y]=B[X]\otimes_k k[Y]$.
Applying Theorem~\ref{comm_thm:Q0_implies_D} to $B[X][Y]$,
we obtain 

\begin{lem}\label{comm_cor:Q1_implies_I}
Condition \textbf{(Q1)} implies \textbf{(I)}.
\end{lem}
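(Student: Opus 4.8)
The plan is to deduce \textbf{(I)} from condition \textbf{(D)} for the larger $k$-algebra $B[X]$, using the observation recorded just before the statement: an endomorphism of a simple $B[X]$-module can be encoded as the action of a new central indeterminate. Concretely, I would fix a simple $B[X]$-module $M$ together with an arbitrary $\varphi\in\End_{B[X]}(M)$ — the goal being to show $\varphi$ is algebraic over $k$ — and promote $M$ to a module over $B[X,Y]=B[X]\otimes_k k[Y]$ by letting $B[X]$ act as before and letting $Y$ act as $\varphi$. This is well defined precisely because $\varphi$ is $B[X]$-linear and $Y$ is central. The first thing to verify is that $M$ is \emph{simple} as a $B[X,Y]$-module: a $B[X,Y]$-submodule of $M$ is exactly a $\varphi$-invariant $B[X]$-submodule, so $B[X]$-simplicity of $M$ forces the only such submodules to be $0$ and $M$. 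Hence \textbf{(Q1)} applies to the pair $(M,\varphi)$ and yields a nonzero $f\in k[Y]$ such that $M_f:=k[Y]_f\otimes_{k[Y]}M$ is free over $k[Y]_f$.

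Next I would rerun the proof of Theorem~\ref{comm_thm:Q0_implies_D} for this single module, with the $k$-algebra $B[X]$ in the role of $B$, the indeterminate $Y$ in the role of $X$, and $M$ in the role of the simple module there; the needed supply of irreducibles comes from Lemma~\ref{comm_lem:finitely_many_irreducibles} applied to $k[Y]$. Assuming $\varphi$ transcendental over $k$: $B[X,Y]$-simplicity makes the canonical map $\eta:M\to M_f$ injective, so $M_f\neq 0$; choosing an irreducible $g\in k[Y]$ not dividing $f$, multiplication by $g$ on the free module $M_f$ fails to be surjective, whereas $\im g(\varphi)$ is a nonzero $B[X,Y]$-submodule of $M$, hence equal to $M$, which makes that very same map surjective. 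This contradiction shows $\varphi$ is algebraic over $k$, and since $\varphi$ ranged over $\End_{B[X]}(M)$, condition \textbf{(I)} follows.

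The point requiring care is that one must \emph{not} try to deduce the full strength of \textbf{(Q0)} for $B[X]$: that would demand generic freeness of every simple $B[X,Y]$-module, which \textbf{(Q1)} does not supply, since a simple $B[X,Y]$-module need not be simple over $B[X]$. What rescues the argument is that the conclusion \textbf{(I)} only ever needs generic freeness for those $B[X,Y]$-modules obtained from a $B[X]$-simple module by letting $Y$ act through an endomorphism, and this is exactly the class that \textbf{(Q1)} controls. So the appropriate move is to reuse Theorem~\ref{comm_thm:Q0_implies_D} at the level of its proof for a fixed $M$ rather than to apply it as a black box, and this bookkeeping — matching up which simple modules one actually needs — is the main (and essentially only) obstacle.
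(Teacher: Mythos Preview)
Your proposal is correct and follows the same route as the paper: promote the pair $(M,\varphi)$ to a simple $B[X,Y]$-module and invoke the argument of Theorem~\ref{comm_thm:Q0_implies_D} with $B[X]$ in the role of $B$. You are in fact more careful than the paper, which simply writes ``applying Theorem~\ref{comm_thm:Q0_implies_D} to $B[X][Y]$'' without spelling out that \textbf{(Q1)} does not yield \textbf{(Q0)} for $B[X]$ in full but only for the particular simple $B[X,Y]$-modules that actually arise---your observation that the proof, rather than the theorem as a black box, is what one applies is exactly the right reading.
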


\noindent Finally we add the conditions \textbf{(A)} the commutant
$\End_{B}(M)$ of any simple $B$-module $M$ is algebraic over $k$ and
\textbf{(J)} $B$ is a Jacobson algebra. Altogether these conditions
are related as follows:\\\\
\makebox[15cm]{
\xymatrix{
\textbf{(Q1)} \ar@{=>}[d] \ar@{=>}[r] & \textbf{(Q0)} \ar@{=>}[d] \\
\textbf{(I)} \ar@{=>}[r] & \textbf{(D)} \ar@{=>}[r]
\ar@{=>}[d] & \textbf{(J)}\\
& \textbf{(A)}
}
}

\noindent Here the implication \textbf{(D)}$\Rightarrow$\textbf{(J)}
is valid only if $B$ is left (or right) noetherian. The others hold
true in general.

\subsection*{Modules over filtered algebras}

Some results of the preceding section can be generalized to
filtered $k$-algebras such that the associated graded algebra
is noetherian. To begin with, let $A$ be a commutative
ring and $B$ an $A$-algebra. An increasing sequence
$\mcF=\{B_n:n\ge -1\}$ of $A$-submodules of $B$ is
called a filtration of $B$ if the following conditions
are satisfied: $B_{-1}=0$, $A\,\mdot\,1\subset B_0$,
$\sum_{n=0}^\infty B_n=B$, and $B_mB_n\subset B_{m+n}$.
We say that $(B,\mcF)$ is an $\mN$-filtered $A$-algebra.
The filtration is locally finite if $B_n$ is a finitely
generated $A$-module for all $n\ge 0$.\\\\
Subject to a given filtration $\mcF$ of $B$ we define the
$A$-modules $\bar{B}_n=B_n/B_{n-1}$ and $\bar{B}=
\bigoplus_{n=0}^\infty \bar{B}_n$. It is easy to see that
\[\bar{B}_m\times \bar{B}_n\to \bar{B}_{m+n},\;
(x+B_{m-1})(y+B_{n-1})=xy+B_{m+n-1}\]
is well-defined and turns $\bar{B}$ into an $\mN$-graded
$A$-algebra. We call $\bar{B}=\gr(B,\mcF)$ the associated
graded algebra of $B$.\\\\
Let $B$ be an $A$-algebra. If $\frakg$ is an $A$-submodule
of $B$ such that $[\frakg,\frakg]\subset\frakg$ and
$B=\sum_{n=0}^\infty\frakg^n$, then $B_n=\frakg^n+B_{n-1}$
yields a filtration $\mcF$ of $B$ such that the associated
graded algebra $\gr(B,\mcF)$ is commutative, i.e., such that
$[B_n,B_m]\subset B_{n+m-1}$ for all $m,n\ge 0$. Generalizing
the notion of finitely generated almost commutative algebras
given in the previous section we state
\begin{defn}
A filtered $A$-algebra $(B,\mcF)$ is called almost commutative
if $\gr(B,\mcF)$ is commutative. If in addition $\gr(B,\mcF)$ is
finitely generated, then $(B,\mcF)$ is said to be of finite type.
\end{defn}

Conversely, if $(B,\mcF)$ is almost commutative, then $B_1$
is a Lie subalgebra of $B$, but $B=\sum_{n=0}^\infty B_1^n$ is
not apparent. If in addition $(B,\mcF)$ is of finite type, then
$B_1$ is finitely generated.\\\\
Suppose that $A$ is a noetherian commutative ring and $B$ is
an almost commutative $A$-algebra of finite type. Then it
follows by the Hilbert basis theorem that $\gr(B,\mcF)$ is
a noetherian $A$-algebra.\\\\
Let $(B,\mcF)$ be a filtered $A$-algebra. For every finitely
generated $B$-module $M$ with generators $\xi_1,\ldots,\xi_r$
we consider the filtration
\[M_n=\sum_{j=1}^r B_n\mdot \xi_j\]
of $M$. This means that $\mcE=\{M_n:n\ge -1\}$ is an increasing
sequence of $A$-submodules of $M$ such that $M_{-1}=0$,
$\sum_{n=1}^\infty M_n=M$, and $B_m\mdot M_n\subset M_{m+n}$.
Here the action of $A$ on $M$ is given by
$a\,\mdot\,\xi=(a\,\mdot\,1)\mdot\,x$. Note that the
definition of $\mcE$ depends on the choice of the
generators $\xi_1,\ldots,\xi_r$ of $M$ and the filtration
of $\mcF$ of $B$. Now we set $\bar{M}_n=M_n/M_{n-1}$ and
$\bar{M}=\bigoplus_{n=0}^\infty \bar{M}_n$.
One verifies easily that
\[\bar{B}_m\times \bar{M}_n\to\bar{M}_{m+n},\;(x+B_{m-1})\mdot
(m+L_{\nu-1})=x\mdot m+L_{m+n-1}\]
turns $\bar{M}=\gr(M,\mcE)$ into a finitely generated graded
$\bar{B}$-module.\\\\
As from now let $k$ be a field and $B$ a $k$-algebra with
filtration $\mcF=\{B_n:n\ge -1\}$. The $k[X]$-algebra $B'=B[X]$
carries a natural filtration $\mcF'$ given by
\[B'_n=B_n[X]=\{p\in B[X]:p(k)\in B_n\text{ for all }k\ge 0\}\]
and the associated graded $k[X]$-algebra $\gr(B',\mcF')$ is
isomorphic to $\bar{B}[X]=\gr(B,\mcF)[X]$.\\\\
We remind the reader that we are interested in finding convenient
conditions which are sufficient for $B$ to be a Jacobson
algebra. To this end we introduce

\begin{itemize}
\item[\textbf{(G0)}] Every cyclic graded $\bar{B}[X]$-module is
generically free over $k[X]$.
\end{itemize}

The next proposition can be found (more or less explicitly)
in the work of Quillen~\cite{Quillen}, Dixmier~\cite{Dix6},
and Artin-Small-Zhang~\cite{ArtinSmallZhang}.

\begin{prop}\label{comm_prop:G0_implies_Q0}
Let $(B,\mcF)$ be an $\mN$-filtered $k$-algebra. Then \textbf{(G0)}
implies \textbf{(Q0)}.
\end{prop}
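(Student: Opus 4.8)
The goal is to deduce \textbf{(Q0)} --- generic freeness of every simple $B[X]$-module over $k[X]$ --- from \textbf{(G0)}, which asserts generic freeness of every \emph{cyclic graded} $\bar B[X]$-module over $k[X]$. The passage from a filtered module to its associated graded module, combined with Lemma~\ref{comm_lem:main_trick}, is the natural bridge. So the plan is: start with a simple $B[X]$-module $M$; it is in particular cyclic, say $M=B[X]\mdot\xi$. Using the filtration $\mcF'$ of $B'=B[X]$ (with $B'_n=B_n[X]$) and the single generator $\xi$, equip $M$ with the filtration $M_n=B'_n\mdot\xi$. Then $\bar M=\gr(M,\mcE)$ is a finitely generated graded $\bar B[X]$-module, and since $M$ is generated by one element, $\bar M$ is in fact a \emph{cyclic} graded $\bar B[X]$-module (generated by the image of $\xi$ in $\bar M_0$). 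Here I use that $\gr(B',\mcF')\cong\bar B[X]$, stated in the excerpt.

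The second step is to apply \textbf{(G0)} to $\bar M$: there is a non-zero $f\in k[X]$ such that $k[X]_f\otimes_{k[X]}\bar M$ is free over $k[X]_f$. I want to upgrade this to freeness of $k[X]_f\otimes_{k[X]}M$ (after possibly enlarging $f$, but in fact the same $f$ should work). For this I invoke Lemma~\ref{comm_lem:main_trick} with $A=k[X]$, the module $M$, and its filtration $\{M_n\}$: I must check that $f\mdot M_n\subset M_{n-1}$ whenever $\bar M_n=M_n/M_{n-1}$ is not free over $k[X]$. Since $\bar M=\bigoplus_n\bar M_n$ and localization commutes with direct sums, $k[X]_f\otimes\bar M$ free over $k[X]_f$ forces each $k[X]_f\otimes\bar M_n$ to be free over $k[X]_f$; for a finitely generated module over the PID $k[X]$ this means $\bar M_n$ has no $f$-power torsion after localizing --- but I want the cleaner statement that $f$ annihilates the torsion part, i.e.\ $f\mdot\bar M_n$ lands appropriately. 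The cleanest route: rerun the argument of Lemma~\ref{comm_lem:main_trick} directly, noting that freeness of the localized graded pieces is exactly the hypothesis needed, after observing that for a finitely generated $k[X]$-module whose localization at $f$ is free, multiplication by $f$ kills the (finite) torsion submodule once $f$ is chosen to be a suitable product of the relevant irreducible factors --- or simply choose $f$ from the outset so that $k[X]_f\otimes\bar M$ is free \emph{and} $f$ kills all torsion in the finitely generated module $\bar M_0$ (the only troublesome piece, since $M$ cyclic makes $\bar M$ generated in degree $0$). Then $f\mdot M_n\subset M_{n-1}$ whenever the quotient is non-free, and Lemma~\ref{comm_lem:main_trick} yields that $k[X]_f\otimes_{k[X]}M$ is free over $k[X]_f$, which is \textbf{(Q0)}.

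The main obstacle is the precise bookkeeping in the second step: \textbf{(G0)} gives freeness of the \emph{whole} graded module after localizing, but Lemma~\ref{comm_lem:main_trick} is phrased in terms of the individual graded pieces $M_n/M_{n-1}$, so I need to extract, from freeness of $\bigoplus_n k[X]_f\otimes\bar M_n$, the condition that $f$ pushes each non-free $M_n$ into $M_{n-1}$. Over the PID $k[X]$ this is a finiteness argument (each $\bar M_n$ is finitely generated, its torsion is annihilated by some polynomial, and I must either invoke finiteness of the set of $n$ with torsion --- which holds because $\bar M$ cyclic forces $\bar M_n$ to be a quotient of $\bar B_n\otimes k[X]\cdot\xi$ and the torsion ideals are nested increasingly, exactly as in the proof of Theorem~\ref{comm_thm:Q0_for_almost_commutative_algebras} --- or argue that a single $f$ witnessing global freeness already does the job). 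I expect that, as in Theorem~\ref{comm_thm:Q0_for_almost_commutative_algebras}, the nestedness of the annihilator ideals $I_\nu$ and a Dickson/Noetherianity argument on $\mN^d$ (or here on $\mN$) reduces the intersection to a finite one, so that a single non-zero $f$ suffices; this is the only genuinely technical point, and everything else is a routine transfer along the isomorphism $\gr(B[X],\mcF')\cong\bar B[X]$.
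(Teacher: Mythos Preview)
Your overall strategy is exactly the paper's: pick a cyclic vector $\xi$, filter $M$ by $M_n=B_n[X]\mdot\xi$, note that $\bar M=\gr(M)$ is a cyclic graded $\bar B[X]$-module, apply \textbf{(G0)} to get $f\in A=k[X]$ with $\bar M_f$ free over $A_f$, and then lift freeness back to $M_f$. The gap is in this last lifting step.

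You try to feed the situation into Lemma~\ref{comm_lem:main_trick}, whose hypothesis is that $f\mdot M_n\subset M_{n-1}$ whenever $\bar M_n=M_n/M_{n-1}$ is \emph{not free over $A$}. But \textbf{(G0)} does not give you this. It only gives that each $A_f\otimes_A\bar M_n$ is free over $A_f$; it says nothing about $f$ annihilating $\bar M_n$. Indeed, if some $\bar M_n$ has both a nonzero free part and a nonzero torsion part over $A$, then $\bar M_n$ is not $A$-free, yet $f$ cannot kill it. Your proposed workarounds (torsion annihilated by a product of irreducibles, nestedness of annihilator ideals, Dickson-type finiteness) all rest on the $\bar M_n$ being finitely generated over $k[X]$, i.e.\ on the filtration $\mcF$ being locally finite. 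That hypothesis is \emph{not} present in this proposition, so the argument as written does not close.

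The paper's fix is short and avoids Lemma~\ref{comm_lem:main_trick} altogether. Since $A_f$ is a PID (Lemma~\ref{comm_lem:localizations_of_principal_ideal_domains}) and $\bar M_f\cong\bigoplus_n A_f\otimes_A\bar M_n$ is free over $A_f$, each summand $A_f\otimes_A\bar M_n$ is a submodule of a free module over a PID, hence free --- with no finiteness assumption. By flatness of $A_f$ over $A$, the images $L_n$ of $A_f\otimes_A M_n$ in $M_f$ form a filtration with $L_n/L_{n-1}\cong A_f\otimes_A(M_n/M_{n-1})$, so every successive quotient is free over $A_f$. Therefore $M_f$ is free over $A_f$, which is \textbf{(Q0)}. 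In other words, you had the right idea when you wrote ``rerun the argument of Lemma~\ref{comm_lem:main_trick} directly, noting that freeness of the localized graded pieces is exactly the hypothesis needed''; the point is simply that freeness of those localized pieces follows for free from the PID property of $A_f$, and you should stop there rather than circling back to the lemma's stronger hypothesis.
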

\begin{proof}
Let $M$ be a simple $B[X]$-module and $\xi\in M$ non-zero. As above
we define a filtration $M_n=B_n[X]\mdot\xi$ of $M$ and consider the
associated graded $\bar{B}[X]$-module $\bar{M}$. Note that $\bar{M}$
is cyclic over $\bar{B}[X]$. Put $A=k[X]$. By \textbf{(G0)} there
exists a non-zero $f\in A$ such that
\[\bar{M}_f=A_f\otimes_A\bar{M}\;\cong\;\bigoplus_{n=0}^\infty\;
A_f\otimes_A (M_n/M_{n-1})\]
is free. (Here we used the fact that $A_f\otimes_A -$ commutes with
direct sums.) Since $A_f$ is a principal ideal domain, it follows
that the $A_f$-submodules $A_f\otimes_A(M_n/M_{n-1})$ of $\bar{M}_f$
are free for all $n$. We know that 
\begin{equation*}
0\longrightarrow A_f\otimes_A M_{n-1}\overset{\eta_{n-1}}{\longrightarrow}
A_f\otimes_A M_n \longrightarrow A_f\otimes_A (M_n/M_{n-1})
\longrightarrow 0
\end{equation*}
is exact because $A_f$ is flat over $A$. Consequently the canonical
images $L_n$ of $A_f\otimes_A M_n$ in $M_f=A_f\otimes_A M$ form a
filtration of $M_f$ such that
\[L_n/L_{n-1}\cong (A_f\otimes_A M_n)\;/\;\eta_{n-1}
(A_f\otimes_A M_{n-1})\cong A_f\otimes_A(M_n/M_{n-1})\]
is free over $A_f$ for all $n\ge 0$. Hence it follows that
$A_f\otimes_A M_f$ is a free $A_f$-module. This proves~\textbf{(Q0)}.
\end{proof}

In the sequel we shall restrict ourselves to filtered algebras
$(B,\mcF)$ such that $\gr(B,\mcF)$ is left noetherian. As we will see
next, a necessary condition for the latter is that $B$ itself is
left noetherian: Let $A$ be a ring and $(B,\mcF)$ an $A$-algebra with
filtration $\mcF=\{B_n:n\ge -1\}$. Let $\gr_n:B_n\to B_n/B_{n-1}$
denote the canonical maps. If $L$ is a left ideal of $B$, then
\[\gr(L)=\bigoplus_{n=0}^\infty\gr_n(L\cap B_n)\]
is a left ideal of $\gr(B)$. Clearly $L_1\subset L_2$ implies
$\gr(L_1)\subset\gr(L_2)$.

\begin{lem}\label{comm_lem:gr(L_1)_strictly_contained_in_gr(L_2)}
If $L_1$ and $L_2$ are left ideals of $B$ such that $L_1\subset L_2$
and $L_1\neq L_2$, then $\gr(L_1)\neq\gr(L_2)$.
\end{lem}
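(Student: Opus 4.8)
The plan is to pick an element witnessing $L_1 \neq L_2$ that lies in the lowest possible filtration degree, and show that its leading symbol lies in $\gr(L_2)$ but not in $\gr(L_1)$. Concretely, since $L_1 \subsetneq L_2$, choose $b \in L_2 \setminus L_1$ with $n$ minimal such that $b \in B_n$ (this $n$ exists because $B = \sum_m B_m$, and the set of such $n$ over all choices of $b$ is a nonempty subset of $\mN$). I claim $\gr_n(b) = b + B_{n-1} \in \gr_n(L_2 \cap B_n)$ is a nonzero element of $\gr(L_2)$ which is not in $\gr(L_1)$.

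First, $\gr_n(b) \neq 0$: if $b \in B_{n-1}$ we would contradict the minimality of $n$. Next, $\gr_n(b) \in \gr_n(L_2 \cap B_n) \subset \gr(L_2)$ by definition, since $b \in L_2 \cap B_n$. The heart of the argument is showing $\gr_n(b) \notin \gr(L_1)$. Because $\gr(L_1) = \bigoplus_m \gr_m(L_1 \cap B_m)$ is a graded submodule and $\gr_n(b)$ is homogeneous of degree $n$, membership $\gr_n(b) \in \gr(L_1)$ would force $\gr_n(b) \in \gr_n(L_1 \cap B_n)$, i.e.\ there would exist $a \in L_1 \cap B_n$ with $a + B_{n-1} = b + B_{n-1}$, hence $b - a \in B_{n-1}$. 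But then $b - a \in L_2$ (as $b \in L_2$ and $a \in L_1 \subset L_2$) and $b - a \notin L_1$ (since $b \notin L_1$ while $a \in L_1$), so $b - a$ is an element of $L_2 \setminus L_1$ lying in $B_{n-1}$, contradicting the minimality of $n$. Therefore $\gr_n(b) \in \gr(L_2) \setminus \gr(L_1)$, which gives $\gr(L_1) \neq \gr(L_2)$.

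I expect no serious obstacle here; the only point requiring care is the bookkeeping with the grading—namely that a homogeneous element of degree $n$ lies in a graded submodule iff its degree-$n$ component does—and the fact that passing from $b$ to $b-a$ stays outside $L_1$, which is immediate since $L_1$ is a subgroup. The minimality of $n$ is used twice: once to see the leading symbol is nonzero, and once to derive the final contradiction.
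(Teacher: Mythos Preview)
Your proof is correct and follows essentially the same approach as the paper's: both pick the minimal filtration degree $n$ where $L_1\cap B_n\neq L_2\cap B_n$, take $b\in L_2\cap B_n\setminus L_1$, and use that subtracting an element of $L_1\cap B_n$ with the same symbol would push $b$ into a lower degree, contradicting minimality. The only cosmetic difference is that the paper argues by contradiction (assuming $\gr(L_1)=\gr(L_2)$ and concluding $b\in L_1$), while you phrase it as a direct construction of an element in $\gr(L_2)\setminus\gr(L_1)$.
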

\begin{proof}
Suppose that $\gr(L_1)=\gr(L_2)$. Let $n\ge 0$ be minimal with
$L_1\cap B_n\neq L_2\cap B_n$. Choose $b\in L_2\cap B_n$ such
that $b\not\in L_1$. Since $\gr(L_1)=\gr(L_2)$, there is a
$c\in L_1\cap B_n$ such that $\gr_n(c)=\gr_n(b)$. By the
minimality of $n$ we conclude that $b-c\in L_1\cap B_{n-1}$.
But this implies $b=(b-c)+c\in L_1$, a contradiction.
\end{proof}

\noindent Suppose $\{L_n:n\ge 0\}$ is a chain of left ideals
of $B$. Then $\{\gr(L_n):n\ge 0\}$ is a chain of left ideals
of $\gr(B,\mcF)$ which becomes stationary provided that
$\gr(B,\mcF)$ is left noetherian. By Lemma~
\ref{comm_lem:gr(L_1)_strictly_contained_in_gr(L_2)}
it follows that $\{L_n:n\ge 0\}$ is stationary. This
proves $B$ to be left noetherian.

\begin{thm}\label{comm_thm:filtered_noetherian_algebras_satisfy_G0}
Let $B$ be a $k$-algebra endowed with a locally finite filtration
$\mcF$ such that the associated graded $k$-algebra $\bar{B}=\gr(B,\mcF)$
is left noetherian. Then $B$ satisfies condition \textbf{(G0)}. In
particular $B$ is a Jacobson algebra and $\End_B(M)$ is algebraic
over~$k$ for every simple $B$-module $M$.
\end{thm}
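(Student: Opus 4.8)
The strategy is to establish condition \textbf{(G0)} and then invoke the chain of implications already assembled in the excerpt. Once \textbf{(G0)} holds, Proposition~\ref{comm_prop:G0_implies_Q0} gives \textbf{(Q0)}, Theorem~\ref{comm_thm:Q0_implies_D} gives \textbf{(D)}, and since $B$ is left noetherian (which follows from the noetherianness of $\bar{B}=\gr(B,\mcF)$ via Lemma~\ref{comm_lem:gr(L_1)_strictly_contained_in_gr(L_2)} and the remark immediately following it), Theorem~\ref{comm_thm:E_implies_Jacobson_property} yields that $B$ is Jacobson; finally, every $\varphi\in\End_B(M)$ produces a simple $B[X]$-module, so \textbf{(D)} forces $\varphi$ to be algebraic over $k$. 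So the entire content of the theorem reduces to proving \textbf{(G0)}.

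\textbf{Proving (G0).} Let $N$ be a cyclic graded $\bar{B}[X]$-module, say $N=\bar{B}[X]\cdot\zeta$ with $\zeta$ a homogeneous generator, and set $A=k[X]$. I would filter $N$ by $A$-submodules using the grading of $\bar{B}$: put $N_{(n)}=\bigoplus_{j\le n}N_j$ where $N_j$ is the degree-$j$ component. Because $\bar{B}$ is generated in degree $\le$ some finite bound and, crucially, the filtration of $B$ is \emph{locally finite}, each $\bar{B}_j=B_j/B_{j-1}$ is a finitely generated $A$-free... more precisely a finitely generated $k$-module, hence each graded piece $N_j$ is a finitely generated $A$-module. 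The point is then to pass to the successive quotients $N_{(n)}/N_{(n-1)}\cong N_n$, which are finitely generated modules over the principal ideal domain $A=k[X]$. Over a PID every finitely generated module is a direct sum of a free part and a torsion part; the torsion part is killed by some nonzero element of $A$. The plan is to apply Lemma~\ref{comm_lem:main_trick}: one needs a single nonzero $f\in A$ such that $f\cdot N_n\subset N_{n-1}$ whenever $N_n$ (or rather the relevant successive quotient) fails to be free. The obstacle is that there are infinitely many indices $n$, so one cannot merely multiply finitely many annihilators together.

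\textbf{The main obstacle: controlling infinitely many torsion annihilators.} This is exactly the difficulty handled in Theorem~\ref{comm_thm:Q0_for_almost_commutative_algebras}, and the same device should work here. One shows that the annihilator ideals $I_n=\{a\in A: a\cdot N_n\subset N_{n-1}\}$ are \emph{monotone} along the grading: using that $\bar{B}$ is commutative and generated by finitely many homogeneous elements, multiplication by a generator sends a relation witnessing $a\in I_n$ to a relation witnessing $a\in I_{n+1}$ (up to lower-degree terms, which vanish in the graded quotient), so $I_n\subset I_{n+1}$ — or, organized by multidegree as in the cited theorem, $I_\nu\subset I_{\nu+\alpha}$. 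Consequently the set $\Lambda=\{\nu:I_\nu\ne 0\}$ is upward closed, hence of the form $\Lambda_0+\mN^d$ for a finite set $\Lambda_0$ of minimal elements (Dickson's lemma / the noetherian property of $\mN^d$). Since $A=k[X]$ is an integral domain, the intersection $J=\bigcap_{\gamma\in\Lambda_0}I_\gamma$ is a finite intersection of nonzero ideals, hence nonzero; and monotonicity gives $J\subset I_\nu$ for all $\nu\in\Lambda$. Picking any $0\ne f\in J$, we get $f\cdot N_n\subset N_{n-1}$ exactly on the bad indices, so Lemma~\ref{comm_lem:main_trick} applies and $A_f\otimes_A N$ is $A_f$-free. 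This establishes \textbf{(G0)} and completes the proof.
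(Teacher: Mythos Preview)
Your overall plan---establish \textbf{(G0)} and then chain through Proposition~\ref{comm_prop:G0_implies_Q0}, Theorem~\ref{comm_thm:Q0_implies_D}, and Theorem~\ref{comm_thm:E_implies_Jacobson_property}---is exactly right, and your remark that $B$ inherits left-noetherianness from $\bar{B}$ is also correct. The problem lies entirely in your argument for \textbf{(G0)}.

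You explicitly invoke ``that $\bar{B}$ is commutative and generated by finitely many homogeneous elements.'' Neither is among the hypotheses: the theorem assumes only that the filtration is locally finite and that $\bar{B}=\gr(B,\mcF)$ is left noetherian. Commutativity of $\bar{B}$ is precisely the \emph{almost commutative} case already handled by Theorem~\ref{comm_thm:Q0_for_almost_commutative_algebras}; the point of the present theorem is to go beyond it. Without commutativity (or a PBW-type basis) the multidegree filtration $M_\nu=\sum_{\mu\le\nu}A\,\mdot\, y^\mu\,\mdot\,\zeta$ need not exhaust the module, and the monotonicity $I_\nu\subset I_{\nu+e_j}$---whose proof in Theorem~\ref{comm_thm:Q0_for_almost_commutative_algebras} relies on commutator terms dropping in filtration degree---breaks down. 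Even the coarser claim $\Ann_A(N_n)\subset\Ann_A(N_{n+1})$ along the $\mN$-grading fails in general: for instance if $\bar{B}=k\oplus k y_1\oplus k y_2$ with all products of positive-degree elements zero and $N=\bar{B}[X]/(Xy_1)$, then $\Ann_A(N_1)=(X)$ while $\Ann_A(N_2)=0$.

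The paper's proof avoids all of this by exploiting the noetherian hypothesis directly. Since $A=k[X]$ is central in $\bar{B}[X]$, the set $\bar{T}=T_A(\bar{M})$ of $A$-torsion elements is a $\bar{B}[X]$-\emph{submodule} of the cyclic (hence noetherian, as $\bar{B}[X]$ is left noetherian) $\bar{B}[X]$-module $\bar{M}$. Therefore $\bar{T}$ is finitely generated over $\bar{B}[X]$, and a single nonzero $f\in A$---the product of the annihilators of finitely many generators---kills all of $\bar{T}$. After localizing at $f$, each graded piece $A_f\otimes_A\bar{M}_n$ is a finitely generated (this is where local finiteness enters) torsion-free module over the principal ideal domain $A_f$, hence free; summing over $n$ gives $\bar{M}_f$ free. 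The missing idea, then, is to replace the Dickson/monotonicity device by the observation that noetherianness of $\bar{M}$ forces the torsion submodule to be finitely generated.
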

\begin{proof}
As in \textbf{(G0)} let $\bar{M}$ be a cyclic graded $\bar{B}[X]$-module.
If $\xi\in\bar{M}_0$ is a cyclic vector, then $L=\{p\in\bar{B}[X]:p\mdot\xi=0\}$
is a left ideal of $\bar{B}[X]$ such that $\bar{M}\cong\bar{B}[X]/L$. In
particular it follows that $\bar{M}$ is noetherian. Furthermore the
summands of the grading
\[\bar{M}=\bigoplus_{k=0}^\infty\bar{M}_k=\bigoplus_{k=0}^\infty
\bar{B}_k[X]\mdot\xi\]
are finitely generated $k[X]$-modules. Set $A=k[X]$. Obviously the set
\[\bar{T}=T_A(\bar{M})=\{\;\bar{m}\in\bar{M}:\text{ there exists a
non-zero }g\in A\text{ such that }g\mdot\bar{m}=0\;\}\]
of $A$-torsion elements of $\bar{M}$ is a $\bar{B}[X]$-submodule.
Since $\bar{M}$ is noetherian, we know that
$\bar{T}=\sum_{j=0}^N\bar{B}[X]\mdot\eta_j$ is finitely generated.
As $\eta_j\in\bar{T}$, there exist $0\neq f_j\in A$ such that
$f_j\mdot\eta_j=0$. Setting $f=\prod_{j=0}^N f_j\neq 0$ we find
that $f\mdot\bar{T}=0$. We consider the simple localization
$A_f=S^{-1}A$ of $A$ by $S=\{f^n:n\ge 0\}$. Next we observe
that the $A_f$-torsion of the localization
$\bar{M}_f=A_f\otimes_A\bar{M}$ of $M$ is zero:
\[T_{A_f}(\bar{M}_f)=\{\;\frac{\bar{m}}{s}:\bar{m}\in T_A(\bar{M})
\text{ and }s\in S\;\}=0\,.\]
For $A_f$ is a Pr\"ufer Domain, it follows that $\bar{M}_f$ is a flat
$A_f$-module. Consequently all summands of the decomposition
$\bar{M}_f=\bigoplus_{k=0}^\infty A_f\otimes_A\bar{M}_n$ are flat.
Further the $A_f\otimes_A\bar{M}_n$ are projective because they
are finitely generated over $A_f$. Moreover, they are even
free because $A_f$ is a principal ideal domain. Altogether
we see that $\bar{M}_f$ is a free $A_f$-module. This
proves \textbf{(G0)}. Since $\gr(B,\mcF)$ and hence $B$ are
noetherian, Proposition~\ref{comm_prop:G0_implies_Q0} and
Theorem~\ref{comm_thm:Q0_implies_D} imply that $B$
is a Jacobson algebra satisfying \textbf{(A)}.
\end{proof}

We sustain our search for sufficient conditions for $B$ to be
Jacobson. The aim is to introduce a condition~\textbf{(G1)} on
the level of associated graded algebras and modules which is
stronger than~\textbf{(Q1)}. To this end we suppose that the
$k$-algebra $B'=B[X]$ carries a filtration $\mcF'=\{B'_n:n\ge -1\}$
which need not be induced by a filtration $\mcF$ of $B$ as above.
However, the $k[Y]$-algebra $B'[Y]$ is endowed with the
natural filtration $\mcF''$ induced by $\mcF'$. In particular
$\gr(B'[Y],\mcF'')\cong\gr(B',\mcF')[Y]$.\\\\
Let $M$ be a simple $B'$-module and $\varphi\in\End_{B'}(M)$. As usual we
regard $M$ as a $B'[Y]$-module via $Y\,\mdot\,m=\varphi(m)$ and form the
associated graded $\gr(B'\mcF')[Y]$-module $\bar{M}$. Note that $\bar{M}$
is cyclic over $\gr(B',\mcF')$. As a variant of \textbf{(G0)} we implement

\begin{itemize}
\item[\textbf{(G1)}] Every graded $\gr(B',\mcF')[Y]$-module $\bar{M}$
which is cyclic over $\gr(B',\mcF')$ is generically free over $k[Y]$.
\end{itemize}

\noindent Applying Proposition~\ref{comm_prop:G0_implies_Q0}
to $(B'[Y],\mcF'')$ we obtain

\begin{cor}
Condition \textbf{(G1)} implies \textbf{(Q1)}.
\end{cor}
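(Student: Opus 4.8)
The plan is to obtain the corollary by running the argument of Proposition~\ref{comm_prop:G0_implies_Q0} for the filtered $k$-algebra $(B',\mcF')$, with $Y$ in place of the auxiliary indeterminate $X$ used there. The only extra thing to check beyond that proof is that the graded module it produces is, in the present situation, cyclic over $\gr(B',\mcF')$ itself and not merely over $\gr(B',\mcF')[Y]$, so that condition~\textbf{(G1)} --- rather than the stronger requirement that \emph{every} cyclic graded $\gr(B',\mcF')[Y]$-module be generically free over $k[Y]$ --- is enough.

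Concretely, I would start from the data of~\textbf{(Q1)}: a simple $B'$-module $M$ together with an endomorphism $\varphi\in\End_{B'}(M)$, with $M$ regarded as a $B'[Y]$-module by letting $Y$ act as $\varphi$. The first observation is that $M$ is then automatically a simple $B'[Y]$-module, since a $B'[Y]$-submodule of $M$ is the same thing as a $\varphi$-invariant $B'$-submodule and $M$, being simple over $B'$, has no proper $B'$-submodules at all. Hence $M$ is cyclic over $B'[Y]$, generated by any non-zero $\xi$, and giving it the standard filtration of a cyclic module over the filtered algebra $(B'[Y],\mcF'')$ yields the associated graded module $\bar M$ over $\gr(B'[Y],\mcF'')\cong\gr(B',\mcF')[Y]$. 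Because $M$ is already cyclic over the \emph{smaller} algebra $B'$, this $\bar M$ is --- as recorded in the discussion preceding~\textbf{(G1)} --- cyclic over $\gr(B',\mcF')$.

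Now~\textbf{(G1)} applies directly to $\bar M$ and provides a non-zero $f\in A=k[Y]$ for which $A_f\otimes_A\bar M$ is free over $A_f$. From this point the argument of Proposition~\ref{comm_prop:G0_implies_Q0} carries over verbatim: using that $A_f$ is flat over $A$ and that $A_f\otimes_A-$ commutes with direct sums, the canonical images $L_n$ of $A_f\otimes_A M_n$ inside $M_f=A_f\otimes_A M$ form a filtration of $M_f$ with $L_n/L_{n-1}\cong A_f\otimes_A(M_n/M_{n-1})$; each of these successive quotients is isomorphic to a submodule of the free module $A_f\otimes_A\bar M$ over the principal ideal domain $A_f$, hence is itself free, and therefore $M_f$ is a free $A_f$-module. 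Thus $M$ is generically free over $k[Y]$, which is exactly the assertion of~\textbf{(Q1)}.

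The single delicate point --- and the reason one cannot merely invoke Proposition~\ref{comm_prop:G0_implies_Q0} for $(B',\mcF')$ without further comment --- is the step asserting that $\bar M$ is cyclic over the subalgebra $\gr(B',\mcF')$ rather than only over $\gr(B',\mcF')[Y]$. This is precisely where one uses that $M$ is simple over $B'$ itself, not merely over $B'[Y]$, and it is what makes the weaker hypothesis~\textbf{(G1)} sufficient. Everything else is the exactness of localization, already used several times in the preceding results.
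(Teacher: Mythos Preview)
Your proposal is correct and follows the paper's approach exactly: the paper's entire proof is the single line ``Applying Proposition~\ref{comm_prop:G0_implies_Q0} to $(B'[Y],\mcF'')$'', and you have unpacked what this means, correctly isolating the one point that needs comment --- that the associated graded module $\bar M$ is cyclic over $\gr(B',\mcF')$ rather than merely over $\gr(B',\mcF')[Y]$, so that the weaker hypothesis \textbf{(G1)} suffices --- which is precisely the observation the paper records in the discussion immediately preceding the statement of~\textbf{(G1)}.
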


\begin{rem}
It is an interesting question whether the preceding results can
be generalized to filtered algebras over arbitrary Jacobson rings.
In~\cite{Szczepanski} it is proven that if $A$ is a noetherian
Jacobson ring and $(B,\mcF)$ is an $A$-algebra with a locally
finite filtration such that $\gr(B,\mcF)$ is noetherian, then
$B$ is a Jacobson algebra. The crucial step in the proof is
to show that $A/A\cap P$ is a field for every primitive ideal
$P$ of $B$.
\end{rem}

\vspace{1.5cm}

\subsection*{Appendix A: More on filtered algebras}

\noindent Let $I$ be an ideal of an algebra $B$ with filtration
$\mcF=\{B_n:n\ge -1\}$. Let $\pi:B\to B/I$ denote the canonical
map. Then $\dot{\mcF}=\{\pi(B_n):n\ge -1\}$ is a filtration of $B/I$.
Further the maps $\pi_n:B_n/B_{n-1}\to \pi(B_n)/\pi(B_{n-1})$
define a homomorphism of $\gr(B,\mcF)$ onto $\gr(B/I,\dot{\mcF})$
with kernel $\gr(I)$. From this we deduce that $B/I$ is finitely
generated and almost commutative whenever $B$ is. Further if $B$
is an almost commutative algebra of finite type, so is $B/I$.\\\\
The following observation seems to be appropriate: If $(B,\mcF)$
is a finitely generated, almost commutative $A$-algebra, then
$\gr(B,\mcF)$ is a finitely generated commutative $A$-algebra,
but the converse fails. The notion of an almost commutative
algebra $(B,\mcF)$ of finite type is more general.\\\\
Under the additional assumption that $A$ is a principal ideal
domain, the preceding result can be generalized to the case
of almost commutative $A$-algebras $B$ of finite type. The
next proposition is contained implicitly in
Quillen~\cite{Quillen}. See also Lemme~2.6.4
in~\cite{Dix6}.\\\\
Let $k$ be a field, $A$ a commutative $k$-algebra, and
$(B,\mcF)$ an $A$-algebra with filtration $\mcF=\{B_n:n\ge -1\}$.
Clearly $B'=A\otimes_k B$ becomes an $A$-algebra via
\[(a_1\otimes b_1)(a_2\otimes b_2)=(a_1a_2)\otimes(b_1b_2)
\quad\text{and}\quad a\mdot(a_1\otimes b_1)=(aa_1)\otimes b_1\;.\]
Further $B_n'=A\otimes_k B_n$ defines a filtration $\mcF'$
of the $A$-algebra $B'$. Note that $B'$ can also be regarded
as a filtered $k$-algebra. Since $A\otimes_k-$ is exact and
commutes with direct sums, it follows that
\[B_n'\,/\,B_{n-1}'=(A\otimes_k B_n)\;/\;(A\otimes_k B_{n-1})
\cong A\otimes_k(B_n\,/\,B_{n-1})\]
are isomorphic as $A$-modules, and
\[\gr(B',\mcF')=\bigoplus_{n=0}^\infty\;B_n'\,/\,B_{n-1}'
=A\otimes_k(\;\bigoplus_{n=0}^\infty B_n\,/\,B_{n-1}\;)\cong
A\otimes_k\gr(B,\mcF)\]
as $A$-algebras. From the last equation we deduce
\begin{enumerate}
\item If $(B,\mcF)$ is a (finitely generated) almost commutative
$k$-algebra, then $(B',\mcF')$ is also a (finitely generated)
almost commutative $A$-algebra.
\item If $\gr(B,\mcF)$ is a finitely generated $k$-algebra, then
$\gr(B',\mcF')$ is a finitely generated $A$-algebra. If in
addition $A$ is finitely generated as a $k$-algebra, then
$\gr(B',\mcF')$ is also a finitely generated $k$-algebra.
\end{enumerate}

\subsection*{Appendix B: Localizations}

In this section we collect some results about localizations. In particular
we prove that every localization $S^{-1}A$ of a commutative ring $A$ is a flat
(right) $A$-module.\\\\
Let $A$ be a commutative ring and $S$ a multiplicative subset of $A$ which
means $1\in S$, $0\not\in S$, and $s,t\in S$ implies $st\in S$. In this
section we shall discuss the notion of a localization of $A$ with respect
to $S$. Generalizing the definition of the field of fractions of a
integral domain we consider the following equivalence relation on $S\times A$:
$(s,a)\sim (t,b)$ if and only if there exists $v\in S$ such that
$vta=vsb$. Let $\frac{a}{s}$ denote the equivalence class of $(s,a)$,
and $S^{-1}A$ the set of all equivalence classes. One verifies easily that
\[\frac{a}{s}\;+\;\frac{b}{t}=\frac{ta+sb}{st}\quad\text{and}\quad
\frac{a}{s}\;\mdot\;\frac{b}{t}=\frac{ab}{st}\]
give a well-defined addition and multiplication on $S^{-1}A$. Further
we define a map $\iota:A\to S^{-1}A$, $\iota(a)=\frac{a}{1}$. Then the
localization $(S^{-1}A,\iota)$ of $A$ with respect to $S$ has the
following properties:
\begin{itemize}
\item $S^{-1}A$ is a ring and $\iota:A\to S^{-1}A$ is a ring homomorphism.
\item $\iota(s)$ is invertible for all $s\in S$.
\item If $B$ is a ring and $\varphi:A\to B$ a ring homomorphism such that
$\varphi(s)$ is invertible for all $s\in S$, then there exists a unique
homomorphism $\bar{\varphi}:S^{-1}A\to B$ such that
$\varphi=\bar{\varphi}\circ\iota$.
\item $S^{-1}A$ is generated by $\iota(A)$ and $\iota(S)^{-1}$.
\item $\ker\iota=\{a\in A: as=0\text{ for some }s\in S\}$.
\end{itemize}
Clearly the first three properties determine $(S^{-1}A,\iota)$ up to
isomorphisms. Next we shall discuss the ideal theory of $A$ and $S^{-1}A$.
If $J$ is an ideal of $A$, then
\[\ext(J)=(S^{-1}A)\mdot\iota(J)=\{\;\frac{a}{s}:a\in J\text{ and }s\in S\;\}\]
is an ideal of $S^{-1}A$ called the extension of $J$ in $S^{-1}A$. If $I$
is an ideal of $S^{-1}A$, then
\[\res(I)=\iota^{-1}(I)=\{\;a\in A:\frac{a}{1}\in I\;\}\]
is an ideal of $A$, the restriction of $I$ to $A$. Obviously
\[I=\ext(\res(I))\quad\text{and}\quad J\subset\res(\ext(J)).\]
Note that $I\neq S^{-1}A$ is proper if and only if $\res(I)\cap S=\emptyset$.
If $I$ is prime, so is $\res(I)$. If $J$ is prime and $J\cap S=\emptyset$,
then $\ext(J)$ is also prime. In this case $J=\res(\ext(J))$. Furthermore, if
$J_1\subset J_2$, then $\ext(J_1)\subset \ext(J_2)$, and if $I_1\subset I_2$,
then $\res(I_1)\subset\res(I_2)$.

\begin{defn}
Let $A$ be a commutative ring. Let $\Spec(A)$ denote the set of all prime ideals
of $A$. If $J$ is an ideal of $A$, then $h(J)=\{P\in\Spec(A):J\subset P\}$ is
called the hull of $J$. Conversely, if $X\subset\Spec(A)$, then the ideal
$k(X)=\bigcap\{P:P\in X\}$ is called the kernel of $X$. We say that a subset
$X$ of $\Spec(A)$ is closed if and only if $X=h(J)$ for some ideal $J$ of
$A$. The space $\Spec(A)$ endowed with this so-called hull-kernel topology
is the spectrum of $A$.
\end{defn}

\noindent The preceding observations show that the spectrum $\Spec(S^{-1}A)$
of the localization of $A$ with respect to $S$ can be identified with the
subset $\{P\in\Spec(A):P\cap S=\emptyset\}$ of the spectrum of $A$ by means
of the homeomorphisms $\ext$ and $\res$.

\begin{lem}\label{comm_lem:localizations_of_principal_ideal_domains}
If $A$ is a principal ideal domain, so is $S^{-1}A$. 
\end{lem}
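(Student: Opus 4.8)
The plan is to show directly that $S^{-1}A$ is a principal ideal domain by analysing its ideals via the extension--restriction correspondence already recorded in Appendix~B. First I would note that $S^{-1}A$ is an integral domain: it embeds into the field of fractions of $A$ (which exists since $A$ is a domain), because the canonical map $\iota$ has trivial kernel when $A$ has no zero divisors, and a subring of a field is a domain. So the only real work is to verify that every ideal of $S^{-1}A$ is principal.

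Next I would take an arbitrary ideal $I$ of $S^{-1}A$ and set $J=\res(I)=\iota^{-1}(I)$, which is an ideal of $A$. Since $A$ is a principal ideal domain, we can write $J=aA$ for some $a\in A$. The key identity from Appendix~B is $I=\ext(\res(I))=\ext(J)$, valid for every ideal $I$ of the localization. It then remains to observe that $\ext(aA)=(S^{-1}A)\mdot\iota(aA)$ is the principal ideal of $S^{-1}A$ generated by $\iota(a)=\tfrac{a}{1}$: indeed every element of $\ext(aA)$ has the form $\tfrac{ab}{s}=\tfrac{a}{1}\mdot\tfrac{b}{s}$ with $b\in A$, $s\in S$, and conversely every such product lies in $\ext(aA)$. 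Hence $I=\tfrac{a}{1}\mdot S^{-1}A$ is principal.

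There is no serious obstacle here; the statement is essentially a bookkeeping consequence of the correspondence $I\mapsto\res(I)$, $J\mapsto\ext(J)$ together with the two displayed facts $I=\ext(\res(I))$ and $J\subset\res(\ext(J))$ from the preceding discussion. The only point that deserves a word of care is confirming that $S^{-1}A$ is nonzero and has no zero divisors, so that the phrase ``principal ideal domain'' is not vacuous and the generator is unique up to units --- but this follows at once from the embedding into $\mathrm{Frac}(A)$. I would therefore present the argument in two short steps: (i) $S^{-1}A$ is a domain, via the embedding into the field of fractions; (ii) each ideal equals the extension of its restriction, and extensions of principal ideals are principal.
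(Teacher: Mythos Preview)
Your proposal is correct and follows essentially the same route as the paper's proof: restrict an ideal $I$ of $S^{-1}A$ to a principal ideal $\res(I)=A\mdot a$ of $A$, then use $I=\ext(\res(I))=(S^{-1}A)\mdot\tfrac{a}{1}$. The only difference is that you spell out why $S^{-1}A$ is a domain and why $\ext(aA)$ is principal, whereas the paper treats both as evident.
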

\begin{proof}
Obviously $S^{-1}A$ is an integral domain. Let $I$ be an ideal of $S^{-1}A$. Since
$A$ is a principal ideal ring, there exists some $b\in A$ such that
$\res(I)=A\mdot b$. This implies
\[I=\ext(\res(I))=(S^{-1}A)\;\mdot\;\frac{b}{1}.\]
\end{proof}

Now let $P$ be a left $A$-module. Then the functors $\Hom_A(-,P)$ and $\Hom_A(P,-)$
are left exact. One can prove that $P\otimes_A -$ is also right exact.

\begin{defn}
Let $A$ be a ring and $P$ a right $A$-module. We say that $P$ is a flat
$A$-module if the functor $P\otimes_A-$ is exact.
\end{defn}

Since $P\otimes_A-$ is already known to be right exact, it follows
that $P$ is $A$-flat if and only if the following condition is satisfied: If
$\varphi:L\to M$ is an injective homomorphism of left $A$-modules, then
$1\otimes\varphi:P\otimes_A L\to P\otimes_A M$ is injective.\\\\
To prove that localizations $S^{-1}A$ are $A$-flat, i.e., that the functor
$(S^{-1}A)\otimes_A-$ is exact, we introduce localizations of $A$-modules.
Let $A$ be a commutative ring, $S$ a multiplicative subset of $A$, and $M$ a
left $A$-module. In analogy to the definition of $S^{-1}A$ we consider the
following equivalence relation on $S\times M$: $(s,m)\sim (t,n)$ if and only
if there exists a $v\in S$ such that $vt\mdot m=vs\mdot n$. Let $\frac{m}{s}$
denote the equivalence class of $(s,m)$, and $S^{-1}M$ the set of all such
equivalence classes. It is easy to see that
\[\frac{m}{s}\;+\;\frac{n}{t}\;=\;\frac{tm+sn}{st}\quad\text{and}\quad
\frac{a}{r}\;\mdot\;\frac{m}{s}\;=\;\frac{a\mdot m}{rs}\]
gives a well-defined $S^{-1}A$-module structure on $S^{-1}M$.

\begin{lem}\label{comm_lem:localization_of_modules}
Let $A$ be a commutative ring and $S$ a multiplicative subset of $A$. If $M$ is
a left $A$-module, then $(S^{-1}A)\otimes_A M$ and $S^{-1}M$ are isomorphic as
left $S^{-1}A$-modules.
\end{lem}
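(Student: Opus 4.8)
The plan is to construct the isomorphism $(S^{-1}A)\otimes_A M \to S^{-1}M$ in the obvious way and then check it is well-defined and bijective by hand. First I would define a map $\Phi:(S^{-1}A)\otimes_A M \to S^{-1}M$ by sending an elementary tensor $\frac{a}{s}\otimes m$ to $\frac{a\mdot m}{s}$. To see that this is well-defined, one uses the universal property of the tensor product: the assignment $(\frac{a}{s}, m)\mapsto \frac{a\mdot m}{s}$ from $(S^{-1}A)\times M$ to $S^{-1}M$ is $A$-bilinear and $A$-balanced, since $\frac{a\mdot m}{s}$ is visibly additive in each variable separately and $\frac{ab}{s}\otimes m$ and $\frac{a}{s}\otimes(b\mdot m)$ both map to $\frac{ab\mdot m}{s}$. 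Thus $\Phi$ descends to an $S^{-1}A$-linear map on the tensor product. It is clearly $S^{-1}A$-linear because multiplication by $\frac{r}{t}$ on the left factor corresponds to multiplication by $\frac{r}{t}$ on $S^{-1}M$.

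Next I would exhibit an inverse. Define $\Psi:S^{-1}M\to (S^{-1}A)\otimes_A M$ by $\Psi(\frac{m}{s})=\frac{1}{s}\otimes m$. One must check this is well-defined: if $\frac{m}{s}=\frac{n}{t}$ in $S^{-1}M$, there is $v\in S$ with $vt\mdot m = vs\mdot n$, and then in the tensor product
\[
\frac{1}{s}\otimes m = \frac{vt}{vst}\otimes m = \frac{1}{vst}\otimes(vt\mdot m)
= \frac{1}{vst}\otimes(vs\mdot n) = \frac{vs}{vst}\otimes n = \frac{1}{t}\otimes n,
\]
so $\Psi$ is well-defined. It is additive, since $\frac{1}{s}\otimes m + \frac{1}{t}\otimes n = \frac{t}{st}\otimes m + \frac{s}{st}\otimes n = \frac{1}{st}\otimes(tm+sn)$, matching the sum in $S^{-1}M$.

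Finally I would verify $\Phi$ and $\Psi$ are mutually inverse. For $\frac{m}{s}\in S^{-1}M$ we get $\Phi(\Psi(\frac{m}{s}))=\Phi(\frac{1}{s}\otimes m)=\frac{m}{s}$. In the other direction, on an elementary tensor $\frac{a}{s}\otimes m$ we compute $\Psi(\Phi(\frac{a}{s}\otimes m))=\Psi(\frac{a\mdot m}{s})=\frac{1}{s}\otimes(a\mdot m)=\frac{a}{s}\otimes m$, and since elementary tensors generate the tensor product and both maps are additive, $\Psi\circ\Phi=\Id$. Hence $\Phi$ is an isomorphism of $S^{-1}A$-modules. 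I do not expect any serious obstacle here; the only point requiring a little care is checking that $\Psi$ is well-defined, which is exactly the computation displayed above using the definition of the equivalence relation on $S\times M$ together with the fact that $v$ is invertible in $S^{-1}A$.
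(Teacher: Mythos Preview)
Your proposal is correct and follows essentially the same approach as the paper: define $\Phi(\frac{a}{s}\otimes m)=\frac{a\mdot m}{s}$ via the universal property of the balanced tensor product, define $\Psi(\frac{m}{s})=\frac{1}{s}\otimes m$, verify $\Psi$ is well-defined by exactly the computation you display, and check the two maps are mutual inverses. Your version is slightly more detailed in checking additivity and the inverse compositions, but the argument is the same.
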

\begin{proof}
By the universal property of the balanced tensor product we obtain a map
\[\Phi:(S^{-1}A)\otimes_A M\to S^{-1}M\;,\;\Phi(\;\frac{a}{r}\otimes m\;)=
\frac{a\mdot m}{r}\;.\]
One checks that $\Phi$ is a homomorphism of left $S^{-1}A$-modules. On the other
hand we have a map
\[\Psi:S^{-1}M\to(S^{-1}A)\otimes_A M\;,\;\Psi(\;\frac{m}{s}\;)=
\frac{1}{s}\otimes m\;.\]
We check that $\Psi$ is well-defined: Let $m,n\in M$ and $s,t\in S$ such that
$\frac{m}{s}=\frac{n}{t}$. Hence there exists a $v\in S$ such that
$vt\mdot m=vs\mdot n$. This implies
\[\Psi(\frac{m}{s})=\frac{1}{s}\otimes m=\frac{1}{vts}\otimes( vt\mdot m)
=\frac{1}{vts}\otimes(vs\mdot n)=\frac{1}{t}\otimes n=\Psi(\frac{n}{t})\;.\]
Obviously $\Psi$ is also a homomorphism of $S^{-1}A$-modules. Further
$\Psi\circ\Phi=\Id$ and $\Phi\circ\Psi=\Id$.
\end{proof}

\begin{prop}\label{comm_prop:localizations_are_flat}
Let $A$ be a commutative ring and $S$ a multiplicative subset of $A$. Then
$S^{-1}A$ is a flat $A$-module.
\end{prop}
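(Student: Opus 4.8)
The plan is to reduce the flatness of $S^{-1}A$ as an $A$-module to a statement about localizations of modules, using Lemma~\ref{comm_lem:localization_of_modules}. Since $S^{-1}A\otimes_A-$ is already known to be right exact, it suffices to check that for every injective homomorphism $\varphi:L\to M$ of $A$-modules, the induced map $1\otimes\varphi:S^{-1}A\otimes_A L\to S^{-1}A\otimes_A M$ is injective. By Lemma~\ref{comm_lem:localization_of_modules} we may replace $S^{-1}A\otimes_A L$ by $S^{-1}L$ and $S^{-1}A\otimes_A M$ by $S^{-1}M$, and the map $1\otimes\varphi$ becomes the obvious map $S^{-1}\varphi:S^{-1}L\to S^{-1}M$, $\frac{\ell}{s}\mapsto\frac{\varphi(\ell)}{s}$ — one should first record that the isomorphisms $\Phi,\Psi$ of the lemma are natural in the module, so that this identification of $1\otimes\varphi$ with $S^{-1}\varphi$ is legitimate.

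Thus the whole proof comes down to the following concrete claim: if $\varphi:L\to M$ is injective, then so is $S^{-1}\varphi$. First I would note that $\varphi$ induces a well-defined map $S^{-1}\varphi$ at all (this is immediate from the definition of the equivalence relation on $S\times L$). For injectivity, suppose $\frac{\varphi(\ell)}{s}=0$ in $S^{-1}M$. By the definition of the equivalence relation on $S\times M$, this means there exists $v\in S$ with $v\mdot\varphi(\ell)=0$ in $M$, i.e.\ $\varphi(v\mdot\ell)=0$. Since $\varphi$ is injective, $v\mdot\ell=0$ in $L$, and therefore $\frac{\ell}{s}=0$ in $S^{-1}L$. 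Hence $S^{-1}\varphi$ is injective, and flatness follows.

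There is no real obstacle here; the only point that requires a little care is the naturality of the isomorphism in Lemma~\ref{comm_lem:localization_of_modules}, so that one may genuinely transport the map $1\otimes\varphi$ to $S^{-1}\varphi$ rather than merely knowing the source and target are abstractly isomorphic. One checks directly from the formulas $\Phi(\frac{a}{r}\otimes m)=\frac{a\mdot m}{r}$ and $\Psi(\frac{m}{s})=\frac{1}{s}\otimes m$ that for any $A$-linear $\varphi:L\to M$ the square relating $1\otimes\varphi$ and $S^{-1}\varphi$ via these maps commutes; this is a routine verification on generators. With that in hand, the argument above completes the proof.
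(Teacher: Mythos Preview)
Your proposal is correct and follows essentially the same route as the paper: reduce to showing that $1\otimes\varphi$ is injective, identify it via the isomorphisms of Lemma~\ref{comm_lem:localization_of_modules} with the map $S^{-1}L\to S^{-1}M$, $\frac{\ell}{s}\mapsto\frac{\varphi(\ell)}{s}$, and then check injectivity directly from the definition of the equivalence relation. If anything, you are slightly more careful than the paper in explicitly flagging the naturality of $\Phi$ and $\Psi$, which the paper leaves implicit.
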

\begin{proof}
Let $\varphi:L\to M$ be an injective homomorphism of left $A$-modules.
By the preceding considerations it suffices to prove that
$1\otimes\varphi:(S^{-1}A)\otimes_AL\to(S^{-1}A)\otimes_A M$ is injective.
By means of the isomorphisms $\Phi$ and $\Psi$ given in
Lemma~\ref{comm_lem:localization_of_modules} we see that $1\otimes\varphi$
corresponds to the homomorphism
\[\bar{\varphi}:S^{-1}L\to S^{-1}M\;,\;\bar{\varphi}(\;\frac{l}{s}\;)=
\frac{\varphi(l)}{s}\;.\]
It suffices to check that $\bar{\varphi}$ is injective: Let $l\in L$ and $s\in S$
such that $0=\bar{\varphi}(\frac{l}{s})=\frac{\varphi(l)}{s}$. Then there exists
$v\in S$ such that $0=v\mdot\varphi(l)=\varphi(v\mdot l)$. Hence $0=v\mdot l$ because
$\varphi$ is injective. This proves $\frac{l}{s}=\frac{0}{1}$.
\end{proof}

\subsection*{Appendix C: Semiprimitive and semiprime ideals}

Here we collect some properties of (semi-)primitive and (semi-)prime ideals in non-commutative
rings.

\begin{defn}
Let $B$ be a ring and $\Lambda$ a subset of $B$. We say that $\Lambda$ is $m$-closed if,
for any $a,b\in\Lambda$, there exists some $x\in B$ such that $axb\in\Lambda$. Further
$\Lambda$ is called $p$-closed if, for any $a\in\Lambda$, there exists some $x\in B$ such
that $axa\in\Lambda$.
\end{defn}

\noindent Prime ideals are characterized easily as follows.

\begin{prop}\label{comm_prop:characterization_of_prime_ideals}
Let $B$ be a ring and $I$ an ideal of $B$ with $I\neq B$. Then there are equivalent:
\begin{enumerateroman}
\item $I$ is prime.
\item $a_1Ba_2\subset I$ implies $a_1\in I$ or $a_2\in I$.
\item $B\setminus I$ is an $m$-closed subset of $B$.
\end{enumerateroman}
\end{prop}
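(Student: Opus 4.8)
The plan is to establish the cyclic chain of implications \textit{(i)} $\Rightarrow$ \textit{(ii)} $\Rightarrow$ \textit{(iii)} $\Rightarrow$ \textit{(i)}, each step being short. Throughout I use that $B$ is unital, so that the two-sided ideal generated by an element $a$ is $BaB$ and contains $a$; if one wanted to cover the non-unital case one would replace $BaB$ by $\mZ a+Ba+aB+BaB$ and adjust the product estimates accordingly, but this refinement is not needed here.

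For \textit{(i)} $\Rightarrow$ \textit{(ii)}, assume $I$ is prime and $a_1Ba_2\subset I$. Set $J_i=Ba_iB$, the ideal generated by $a_i$. A typical generator of $J_1J_2$ has the form $(b_1a_1b_1')(b_2a_2b_2')=b_1a_1(b_1'b_2)a_2b_2'\in Ba_1Ba_2B\subset BIB\subset I$, and since $I$ is closed under addition this gives $J_1J_2\subset I$. Primality yields $J_1\subset I$ or $J_2\subset I$, hence $a_1\in I$ or $a_2\in I$. For \textit{(ii)} $\Rightarrow$ \textit{(iii)}, let $a,b\in B\setminus I$. If there were no $x\in B$ with $axb\notin I$, then $aBb\subset I$, and \textit{(ii)} would force $a\in I$ or $b\in I$, a contradiction; hence such an $x$ exists, so $B\setminus I$ is $m$-closed. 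For \textit{(iii)} $\Rightarrow$ \textit{(i)}, let $J_1,J_2$ be ideals with $J_1J_2\subset I$ and suppose $J_1\not\subset I$ and $J_2\not\subset I$; pick $a\in J_1\setminus I$ and $b\in J_2\setminus I$. By $m$-closedness there is $x\in B$ with $axb\notin I$, but $ax\in J_1$ since $J_1$ is a right ideal, so $axb\in J_1J_2\subset I$, a contradiction. Thus $I$ is prime.

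I do not expect a genuine obstacle here; the argument is purely formal. The only points that require a little care are arranging the three implications cyclically so that no direction is omitted, and, in the last step, explicitly invoking that $J_1$ is a \emph{right} ideal (to place $ax$ in $J_1$) and that $b$ already lies in $J_2$, so that $axb$ indeed belongs to the product ideal $J_1J_2$.
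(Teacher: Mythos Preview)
Your proof is correct and follows essentially the same approach as the paper. The only organizational difference is that the paper notes \textit{(ii)}$\Leftrightarrow$\textit{(iii)} as a trivial contraposition and then proves \textit{(ii)}$\Rightarrow$\textit{(i)} directly, whereas you close the cycle via \textit{(iii)}$\Rightarrow$\textit{(i)}; the underlying argument (picking $a\in J_1\setminus I$, $b\in J_2\setminus I$ and observing $aBb\subset J_1J_2\subset I$) is the same in both.
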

\begin{proof}
First we prove \textit{(i)}$\Rightarrow$\textit{(ii)}. Suppose that $I$ is prime. Let
$a_1,a_2$ be in $B$ such that $a_1Ba_2\subset I$. Then $(Ba_1B)(Ba_2B)\subset I$. Since
$I$ is prime, it follows $Ba_1B\subset I$ or $Ba_2B\subset I$, and hence $a_1\in I$ or
$a_2\in I$ because $B$ is unital. This proves \textit{(ii)}. Obviously \textit{(iii)} is
the contraposition of \textit{(ii)}. Thus \textit{(ii)}$\Leftrightarrow$\textit{(iii)}.
It remains to prove \textit{(ii)}$\Rightarrow$\textit{(i)}. Let $J_1$ and $J_2$ be
ideals of $B$ such that $J_1J_2\subset I$. Suppose that $J_2\not\subset I$ and choose
$b\in J_2\setminus I$. For every $a\in J_1$ we have $aBb\subset J_1J_2\subset I$ and
thus $a\in I$ by~\textit{(ii)}. This proves $J_1\subset I$. Hence $I$ is prime.
\end{proof}

\noindent Before we state a similar characterization for semiprime ideals, we
prove the following auxiliary result.

\begin{lem}\label{comm_lem:pclosed_subsets_contain_mclosed_subsets}
Let $B$ be a ring. If $\Lambda$ is a $p$-closed subset of $B$ and $x\in\Lambda$, then
there exists a countable $m$-closed subset $\Lambda_0$ of $B$ with $\Lambda_0\subset\Lambda$
and $x\in\Lambda_0$.
\end{lem}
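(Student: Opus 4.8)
\emph{Proof proposal.} The plan is to realize $\Lambda_0$ as the set of terms of a sequence produced by iterating the $p$-closedness hypothesis, starting from $x$. First I would set $x_0=x$ and define the sequence recursively: given $x_n\in\Lambda$, the $p$-closedness of $\Lambda$ furnishes some $r_n\in B$ with $x_nr_nx_n\in\Lambda$, and I put $x_{n+1}=x_nr_nx_n$. Since $x_0=x\in\Lambda$, an immediate induction gives $x_n\in\Lambda$ for all $n\ge 0$, so $\Lambda_0:=\{x_n:n\ge 0\}$ is a countable subset of $\Lambda$ containing $x=x_0$. It remains to verify that $\Lambda_0$ is $m$-closed.

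The heart of the argument is the claim that $x_n\in x_mBx_m$ whenever $0\le m\le n$. I would prove this by induction on $n\ge m$. For $n=m$ it holds because $B$ is unital, namely $x_m=x_m\cdot 1\cdot x_m$. For the inductive step, if $x_n=x_mbx_m$ with $b\in B$, then
\[x_{n+1}=x_nr_nx_n=(x_mbx_m)\,r_n\,(x_mbx_m)=x_m\,(bx_mr_nx_mb)\,x_m\in x_mBx_m,\]
which completes the induction.

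Granting the claim, $m$-closedness of $\Lambda_0$ follows by a short case distinction applied to a given pair $x_i,x_j\in\Lambda_0$. If $i\le j$, write $x_j=x_ibx_i$ for a suitable $b\in B$; then $x_{j+1}=x_jr_jx_j=x_i\,(bx_ir_j)\,x_j$ lies in $x_iBx_j$ and belongs to $\Lambda_0$. If $i\ge j$, write instead $x_i=x_jbx_j$; then $x_{i+1}=x_ir_ix_i=x_i\,(r_ix_jb)\,x_j$ lies in $x_iBx_j$ and belongs to $\Lambda_0$. In either case there is $y\in B$ with $x_iyx_j\in\Lambda_0$, so $\Lambda_0$ is $m$-closed, as desired.

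I do not expect a serious obstacle: the construction is the natural one and the verification is elementary. The only point demanding care is the index bookkeeping — arranging the nesting $x_n\in x_mBx_m$ so that an \emph{arbitrary ordered} pair $(x_i,x_j)$ can be handled by factoring the product $x_iBx_j$ through the term with the larger index, and observing that the base case of that nesting is precisely where unitality of $B$ is used.
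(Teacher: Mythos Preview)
Your proof is correct and follows essentially the same route as the paper: build the sequence $x_{n+1}=x_n r_n x_n$ starting from $x_0=x$, set $\Lambda_0=\{x_n:n\ge 0\}$, and then exhibit $x_{\max(i,j)+1}\in x_iBx_j$ by unfolding the recursion. The only difference is cosmetic---the paper argues directly that $x_{n+1}\in x_nBx_n\subset x_mBx_n$ for $m\le n$, while you first isolate the auxiliary claim $x_n\in x_mBx_m$ by induction---but the content is the same.
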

\begin{proof}
By induction we define a sequence $\{x_n:n\ge 0\}$ of elements of $\Lambda$ as follows: Set
$x_0=x$. If $x_0,\ldots,x_n$ are defined, then there is a $y\in B$ such that $x_nyx_n\in\Lambda$
because $\Lambda$ is $p$-closed. We set $x_{n+1}=x_nyx_n$. Finally we define
$\Lambda_0=\{x_n:n\ge 0\}$. Now we must prove that $x_mBx_n\cap\Lambda_0\neq\emptyset$ for
all $m,n\ge 0$: Suppose that $m\le n$. Then $x_{n+1}\in x_nBx_n\subset x_mBx_n$ and
$x_{n+1}\in\Lambda_0$. The case $m\ge n$ is similar.
\end{proof}

\begin{prop}\label{comm_prop:characterization_of_semiprime_ideals}
Let $B$ be a ring and $I\neq B$ an ideal of $B$. Then there are equivalent:
\begin{enumerateroman}
\item $I$ is semiprime.
\item If $J$ is an ideal of $B$ such that $J^2\subset I$, then $J\subset I$.
\item $aBa\subset I$ implies $a\in I$.
\item $B\setminus I$ is a $p$-closed subset of $B$.
\end{enumerateroman}
\end{prop}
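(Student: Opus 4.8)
The plan is to prove the chain of equivalences \textit{(i)}$\Rightarrow$\textit{(ii)}$\Rightarrow$\textit{(iii)}$\Rightarrow$\textit{(iv)}$\Rightarrow$\textit{(i)}, closing the loop. Most of these steps are short; the genuine content lies in \textit{(iv)}$\Rightarrow$\textit{(i)}, which is where Lemma~\ref{comm_lem:pclosed_subsets_contain_mclosed_subsets} and Proposition~\ref{comm_prop:characterization_of_prime_ideals} earn their keep.

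First I would do \textit{(i)}$\Rightarrow$\textit{(ii)}: if $J^2\subset I$ and $P$ is any prime ideal containing $I$, then $J^2\subset P$, so by primeness $J\subset P$; intersecting over all such $P$ gives $J\subset\sqrt{I}=I$. (This is exactly the argument already rehearsed in the excerpt just before Lemma~\ref{comm_lem:Baer_radical_contains_nilpotent_ideals}.) Next \textit{(ii)}$\Rightarrow$\textit{(iii)}: suppose $aBa\subset I$ and set $J=BaB$, the two-sided ideal generated by $a$. Then $J^2=BaBaB\subset B(aBa)B\subset I$ since $I$ is an ideal, so \textit{(ii)} gives $J\subset I$, whence $a\in I$ as $B$ is unital. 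Then \textit{(iii)}$\Leftrightarrow$\textit{(iv)} is a pure tautology: $B\setminus I$ being $p$-closed says that for every $a\notin I$ there is $x$ with $axa\notin I$, which is precisely the contrapositive of ``$aBa\subset I\Rightarrow a\in I$.''

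The main obstacle is \textit{(iv)}$\Rightarrow$\textit{(i)}, i.e.\ showing that if $B\setminus I$ is $p$-closed then $I$ is an intersection of prime ideals. The plan: take any $a\notin I$; it suffices to produce a prime ideal $P$ with $I\subset P$ and $a\notin P$, since intersecting all such $P$ then forces $\sqrt{I}\subset I$ (the reverse inclusion is automatic). Apply Lemma~\ref{comm_lem:pclosed_subsets_contain_mclosed_subsets} to the $p$-closed set $\Lambda=B\setminus I$ and the point $x=a$ to get a (countable) $m$-closed subset $\Lambda_0\subset B\setminus I$ with $a\in\Lambda_0$. Now consider the family of ideals $J$ of $B$ with $I\subset J$ and $J\cap\Lambda_0=\emptyset$; this family is nonempty (it contains $I$ itself, since $\Lambda_0\subset B\setminus I$) and closed under unions of chains, so by Zorn's Lemma it has a maximal element $P$. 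By construction $a\notin P$ and $I\subset P$. It remains to check $P$ is prime, which I would do via Proposition~\ref{comm_prop:characterization_of_prime_ideals}: suppose $a_1,a_2\notin P$; by maximality the ideals $P+Ba_1B$ and $P+Ba_2B$ both meet $\Lambda_0$, say in elements $b_1,b_2\in\Lambda_0$, and then $m$-closedness of $\Lambda_0$ yields $x\in B$ with $b_1xb_2\in\Lambda_0$. Expanding $b_i\in P+Ba_iB$ and using that $P$ is an ideal, one sees $b_1xb_2\in P+Ba_1Ba_2B$; since $b_1xb_2\notin P$ (it lies in $\Lambda_0$, which is disjoint from $P$), we get $Ba_1Ba_2B\not\subset P$, hence $a_1Ba_2\not\subset P$. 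This verifies condition \textit{(ii)} of Proposition~\ref{comm_prop:characterization_of_prime_ideals}, so $P$ is prime, completing the argument.

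The only place requiring care is the last computation: one must be sure that when $b_i = p_i + \sum_j c_j a_i d_j$ with $p_i\in P$, the product $b_1 x b_2$ lands in $P + (Ba_1B)(Ba_2B)$ — this works because every term in the expansion either has a factor $p_i\in P$ (and $P$ is a two-sided ideal, absorbing on both sides) or is a product of an element of $Ba_1B$ with an element of $Ba_2B$. I expect this bookkeeping, together with correctly invoking Zorn's Lemma on the right poset, to be the crux; everything else is formal.
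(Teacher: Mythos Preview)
Your proof is correct and follows the same cyclic scheme \textit{(i)}$\Rightarrow$\textit{(ii)}$\Rightarrow$\textit{(iii)}$\Leftrightarrow$\textit{(iv)}$\Rightarrow$\textit{(i)} as the paper, with identical arguments for the first three implications. The one genuine difference is in \textit{(iv)}$\Rightarrow$\textit{(i)}: the paper, after producing the $m$-closed set $\Lambda_0\subset B\setminus I$ containing $a$, simply declares $P=B\setminus\Lambda_0$ and invokes Proposition~\ref{comm_prop:characterization_of_prime_ideals} to call it prime. As written this is too quick, since $B\setminus\Lambda_0$ has no reason to be an ideal (the proposition characterizes prime ideals among ideals, it does not manufacture an ideal from an $m$-closed complement). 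Your Zorn's Lemma argument---choosing $P$ maximal among ideals containing $I$ and disjoint from $\Lambda_0$, then using $m$-closedness of $\Lambda_0$ to verify primeness via the element-wise criterion---is the standard and correct way to carry this out, and your bookkeeping for $b_1xb_2\in P+Ba_1Ba_2B$ is fine. So your version is in fact more complete than the paper's on this point.
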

\begin{proof}
First we prove \textit{(i)}$\Rightarrow$\textit{(ii)}. Suppose that $I$ is semiprime and
let $J$ be an ideal of~$B$ such that $J^2\subset I$. Let $P$ be an arbitrary prime ideal
of $B$ such that $I\subset P$. From $J^2\subset P$ it follows $J\subset P$. Intersecting
all these $P$ we conclude $J\subset\sqrt{I}=I$ which proves~\textit{(ii)}. Next we
verify \textit{(ii)}$\Rightarrow$\textit{(iii)}. Let $a\in B$ such that $aBa\subset I$.
Then it follows $(BaB)(BaB)\subset I$ and hence $BaB\subset I$ by~\textit{(ii)}.
Thus $a\in I$ because $B$ is unital. This proves~\textit{(iii)}. Obviously \textit{(iv)}
is the contrapostion of~\textit{(iii)} so that \textit{(iii)}$\Leftrightarrow$\textit{(iv)}.
Finally we establish \textit{(iv)}$\Rightarrow$\textit{(i)}. Suppose that $B\setminus I$
is $p$-closed. We must show that $\sqrt{I}\subset I$. Let $a\in B\setminus I$. From
Lemma~\ref{comm_lem:pclosed_subsets_contain_mclosed_subsets} we deduce that there
exists an $m$-closed subset $\Lambda_0$ of $B\setminus I$ such that $a\in\Lambda_0$.
By Proposition~\ref{comm_prop:characterization_of_prime_ideals} we know that
$P=B\setminus\Lambda_0$ is prime. Since $a\not\in P$, we conclude $a\not\in\sqrt{I}$.
\end{proof}


\begin{thebibliography}{MMMM}

\bibitem{Amitsur} S.\ A.\ Amitsur, {\sl Radicals of polynomial rings.} Canad. J. Math. {\bf 8}
(1956), pp.\ 355--361.
\bibitem{ArtinSmallZhang} M.\ Artin, L.\ W.\ Small, J.\ J.\ Zhang, {sl Generic flatness for strongly
{N}oetherian algebras.} J.\ Algebra {\bf 221} (1999), no.\ 2, pp.\ 579--610.
\bibitem{Dix6} J.\ Dixmier, Alg\`ebres enveloppantes. Editions Jacques Gabay, Paris, 1996.
\bibitem{Dix7} J.\ Dixmier, {\sl Sur les repr\'esentations induites des alg\`ebres de {L}ie.}
J. Math. Pures Appl. {\bf 9}, vol.\ 50, (1971), pp.\ 1--24.
\bibitem{Duflo3} M.\ Duflo, {\sl Caract\`eres des groupes et des alg\`ebres de {L}ie r\'esolubles.}
Ann. Sci. \'Ecole Norm. Sup. {\bf 4}, vol.\ 3 (1970), pp.\ 23--74.
\bibitem{Duflo4} M.\ Duflo, {\sl Repr\'esentations induites d'alg\`ebres de {L}ie.}
C.\ R.\ Acad.\ Sci.\ Paris S\'er.\ A-B {\bf 272} (1971), pp.\ A1157--A1158.
\bibitem{Duflo5} M.\ Duflo, {\sl Certaines alg\`ebres de type fini sont des alg\`ebres de
{J}acobson.} J.\ Algebra, {\bf 27} (1973), pp.\ 358--365.
\bibitem{Goldman} O.\ Goldman, {\sl Hilbert rings and the {H}ilbert {N}ullstellensatz.}
Math.\ Z.\ {\bf 54} (1951), pp.\ 136--140.
\bibitem{Groth1} A.\ Grothendieck, {\sl Rev\^etements \'etales et groupe fondamental. {F}asc. {I}:
{E}xpos\'es 1 \`a 5.} S\'eminaire de G\'eom\'etrie Alg\'ebrique, {\bf 1960/61},
Institut des Hautes \'Etudes Scientifiques, Paris, 1963.
\bibitem{Irving} R.\ S.\ Irving, {\sl Generic flatness and the {N}ullstellensatz for {O}re
extensions.} Comm.\ Algebra, {\bf 7} (1979), no.\ 3, pp.\ 259--277.
\bibitem{Krull} W.\ Krull, {\sl Jacobsonsche {R}inge, {H}ilbertscher {N}ullstellensatz,
{D}imensionstheorie.} Math.\ Z.\ {\bf 54} (1951), pp.\ 354--387.
\bibitem{Quillen} D.\ Quillen, {\sl On the endomorphism ring of a simple module over an
enveloping algebra.} Proc.\ Amer.\ Math.\ Soc.\ {\bf 21} (1969), pp.\ 171--172.
\bibitem{Rabin} J.\ L.\ Rabinovich, {\sl {Z}um {H}ilbertschen {N}ullstellensatz.}
Math.\ Ann.\ {\bf 102} (1929), p.\ 502.
\bibitem{Szczepanski} A.\ F.\ Szczepa{\'n}ski, {\sl Generic flatness and the {J}acobson conjecture.}
Comm. Algebra {\bf 33} (2005), no.\ 11, pp.\ 4159--4169.

\end{thebibliography}

\end{document}